\documentclass[12pt,thmsa]{amsproc}
\usepackage[dvips]{graphics}
\input{amssym.def}
\input{amssym.tex}

\def\gnk{G_{n,k}}
\def\cgnk{\Pi_{n,k}}

\def\Cal{\mathcal}

\def\M{{\Cal M}}

\def\D{{\Cal D}}

\def\gnk{G_{n,k}}

\def\bbm{{\Bbb M}}
\def\bbr{{\Bbb R}}

\def\bbn{{\Bbb N}}
\def\bbh{{\Bbb H}}

\def\bbe{{\Bbb E}}
\def\bbs{{\Bbb S}}

\def\dist{{\hbox{\rm dist}}}

\def\sgn{{\hbox{\rm sgn}}}

\def\const{{\hbox{\rm const}}}

\def\gnk{G_{n,k}}

\def\rn{\bbr^n}

\def\part{\partial}
\def\intl{\int\limits}
\def\b{\beta}

\def\Gam{\Gamma}

\def\a{\alpha}
\def\om{\omega}

\def\del{\delta}
\def\vp{\varphi}

\def\gam{\gamma}

\def\sig{\sigma}
\def\lam{\lambda}
\def\z{\zeta}

\def\t{\tau}


\newtheorem{theorem}{Theorem}[section]
\newtheorem{lemma}[theorem]{Lemma}

\theoremstyle{definition}

\newtheorem{example}[theorem]{Example}

\theoremstyle{remark}
\newtheorem{remark}[theorem]{Remark}

\theoremstyle{corollary}

\newtheorem{proposition}[theorem]{Proposition}

\numberwithin{equation}{section}


\newcommand{\be}{\begin{equation}}
\newcommand{\ee}{\end{equation}}

\newcommand{\bea}{\begin{eqnarray}}
\newcommand{\eea}{\end{eqnarray}}
\newcommand{\Bea}{\begin{eqnarray*}}
\newcommand{\Eea}{\end{eqnarray*}}

\def\sideremark#1{\ifvmode\leavevmode\fi\vadjust{\vbox to0pt{\vss
 \hbox to 0pt{\hskip\hsize\hskip1em
\vbox{\hsize2cm\tiny\raggedright\pretolerance10000
 \noindent #1\hfill}\hss}\vbox to8pt{\vfil}\vss}}}%

                                                   %


\begin{document}

\title[On the Funk-Radon-Helgason Inversion Method]
{On the Funk-Radon-Helgason Inversion Method in Integral Geometry}

\author{Boris Rubin}
\address{
Department of Mathematics, Louisiana State University, Baton Rouge,
LA, 70803 USA}

\email{borisr@math.lsu.edu}

\subjclass[2000]{Primary 44A12; Secondary 47G10}


\dedicatory{Dedicated to Professor Sigurdur Helgason
on the occasion of his 85th birthday}

\keywords{Radon  transforms, Inversion Formulas,  $L^p$ spaces.}

\begin{abstract} The  paper deals with totally geodesic Radon transforms on constant curvature spaces.
We study applicability of the historically the first Funk-Radon-Helgason  method of mean value operators to reconstruction of  continuous and  $L^p$ functions from their Radon transforms. New inversion formulas involving Erd\'elyi-Kober type fractional integrals are obtained. Particular   emphasis is placed on the choice of the differentiation operator in the spirit of the recent Helgason's formula.
\end{abstract}

\maketitle

\section{Introduction}

\setcounter{equation}{0}

Inversion of Radon transforms is one of the central topics of integral geometry  \cite{GGG, GGV, He}. The method of mean value operators,
 when the unknown function is reconstructed from its spherical mean,  was suggested by Funk \cite{F11, F13}  for circular transforms on the $2$-sphere. It was adapted by Radon \cite{Rad} for hyperplane transforms, and extended by Helgason \cite{He} to totally geodesic transforms on arbitrary constant curvature space in any dimension.

 In most  publications the method of mean value operators is applied to infinitely differentiable rapidly decreasing functions.
 Below we investigate applicability of this method   to arbitrary continuous
 and $L^p$ functions.  As in the original works by Funk and Radon, we invoke Abel type integrals, which are basic objects of Fractional Calculus
  \cite{Ru96, SKM}. Using  tools of this branch of analysis, we show that the  Funk-Radon-Helgason  method can be successfully applied to derive new inversion formulas, which work well when ``standard''  procedures are inapplicable because of insufficient decay  of functions at infinity or lack of smoothness. Importance of  fractional integration in integral geometry was pointed out in a series of publications;  see, e.g.,  \cite {Gi, Ru98}.

A customary ingredient of the method of mean value operators for the  Radon transform $\vp=Rf$ over $k$-dimensional totally geodesic submanifolds is differentiation of  form
$\partial/\partial r^2$.
 For example, if $k$ is even, then \be f(x)=\pi^{-k/2}
\Big (-\frac{\partial}{\partial r^2} \Big )^{k/2} (R^*_x \vp) (r) \Big
|_{r=0};
\ee
cf. \cite[p. 113]{Ru04b} for the $k$-plane transform on $\bbr^n$ or \cite[p. 128]{He} (with minor changes) for the similar transform on the hyperbolic space. Here $(R^*_x \vp) (r)$ is a certain mean value of $\vp$, which is called the shifted dual Radon transform \cite{Rou} (precise definitions will be given later).
 A remarkable observation by Helgason \cite [p. 116]{He} is that for an even $k$, to reconstruct $f$ from the $k$-plane transform $\vp=R f$, it suffices to differentiate in $r$, rather than in $r^2$. Helgason's result reads as follows:
\be\label {hel}
f(x)=c_k \left [ \Big (\frac{\partial}{\partial r} \Big )^{k} \, (R^*_x \vp)(r)\right ]_{r=0},\qquad c_k=\const, \quad \text{\rm $k$ even}.
\ee
In the present article we show that compositions of the Erd\'elyi-Kober type fractional integrals with power weights
 yield more inversion formulas with   ``usual'' differentiation $(\partial/\partial r)^k$. This is done  for  totally geodesic transforms in {\it all} dimensions on  arbitrary constant curvature space  and under  minimal assumptions for $f$.

Pointwise inversion of  Radon-like transforms  of nonsmooth functions with minimal assumptions at infinity was studied in
\cite{BR,  Ru02a, Ru02b, Ru04a, Ru04b, So, Str}, where one can find further  references. Methods of these papers mainly deal with different kinds of singular integrals, wavelet transforms, and Riesz potentials. To
the best of our knowledge, applicability of the  method of mean value operators (which is historically the first) to such ``rough" functions was not investigated before. The differentiation issue  related to (\ref{hel}) is especially appealing. Some Radon inversion formulas with ``usual" differentiation, but for smooth rapidly decreasing functions, were obtained in \cite {AR, Mad}. The method of these papers differs from ours in principle.

{\bf Plan of the paper.} Section 2 contains necessary
preliminaries from Fractional Calculus. Sections 3,4, and 5 deal with totally geodesic Radon transforms on the Euclidean space  $\bbr^n$, the $n$-dimensional hyperbolic space $\bbh^n$, and  the unit sphere $S^n$, respectively. In Section 6 we give detailed proof of the Helgason's formula
(\ref{hel}), evaluate the constant $c_k$ (that was not done in \cite{He}), discuss related results and open problems.

{\bf Notation and conventions.}  In the following $ \; \sigma_{n-1}  \!= \!2 \pi^{n/2}/
 \Gamma(n/2)$ is the area of the unit sphere $S^{n-1}$ in $\rn; \;
 e_1, \ldots, e_n$ are coordinate unit
 vectors; $o$ is the origin of $\rn$. We say that $f$ is a locally integrable
 function on $\bbr_+=(0,\infty)$  (resp., on $\bbr^n\setminus \{o\}$),  if it is Lebesgue integrable on any interval $(a,b)$, $0<a<b<\infty$ (resp., on any shell
   $0<a<|x|<b<\infty$). The letter $c$ stands for a constant, which can be different at each occurrence. More notation will be introduced in due course.

\section{Preliminaries from Fractional Calculus}

We recall basic  facts
about fractional integration and differentiation  with the main focus on integral-geometric
 applications in subsequent sections. More information can be found in \cite {Ru96, SKM}.

\setcounter{equation}{0}
\subsection  {Riemann-Liouville fractional integrals}
For a sufficiently good function $f$  on  $\bbr_+=
(0,\infty)$, we consider  two types of the Riemann-Liouville\footnote{In many publications  $I^\a_{-}f$   is called the Weyl fractional integral.
However,  Weyl's publication is dated by 1917 and focused on periodic functions, while  Liouville's work, dealing with such integrals, was published in 1832; see historical notes in \cite {SKM}. } fractional integrals of order $\alpha
> 0$:
\[ (I^\a_{+}f ) (t) =
\frac{1}{\Gamma (\alpha)} \intl^t_0 \frac{f(s) \,ds} {(t - s)^{1-
\alpha}},  \quad (I^\a_{-}f ) (t) = \frac{1}{\Gamma (\alpha)} \intl_t^{\infty}
\frac{f(s) \,ds} {(s-t)^{1- \alpha}}. \]
 Existence of $I^\a_{-}f$ essentially depends on the behavior of $f$ at infinity.

 \begin{lemma}\label{lif} Let $f$ be a 
 locally integrable function on $\bbr_+$.
 Then
 $(I^\a_{-}f)(t)$ is finite for almost all $t>0$ provided
  \be\label{for10} \int_1^\infty |f(s)|\, s^{\a -1}\, ds
<\infty.\ee
If $f$ is non-negative and (\ref{for10}) fails, then $(I^\a_{-}f)(t)=\infty$ for every $t\ge 0$.
\end{lemma}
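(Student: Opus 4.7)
The plan is to reduce both assertions to a single application of Tonelli's theorem to the double integral defining $I^\a_-|f|$.

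For the finiteness claim, I would fix arbitrary $0<T_0<T_1<\infty$ and try to show $\intl_{T_0}^{T_1}(I^\a_-|f|)(t)\,dt<\infty$, since bounding this integral on every compact subinterval of $\bbr_+$ yields the desired a.e.\ finiteness of $I^\a_- f$. Interchanging the order of integration by Tonelli,
\[
\Gamma(\a)\intl_{T_0}^{T_1}(I^\a_-|f|)(t)\,dt=\intl_{T_0}^\infty |f(s)|\intl_{T_0}^{\min(s,T_1)}(s-t)^{\a-1}\,dt\,ds,
\]
and I would split the outer $s$-integration at $T_1$. On $(T_0,T_1)$ the inner integral is bounded by $\a^{-1}(T_1-T_0)^\a$, so that piece is finite by local integrability of $f$. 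On $(T_1,\infty)$ the inner integral equals $\a^{-1}[(s-T_0)^\a-(s-T_1)^\a]$, which by the mean value theorem is $O(s^{\a-1})$ as $s\to\infty$; then the hypothesis (\ref{for10}) closes the estimate.

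For the divergence claim I would argue pointwise. Fix $t\ge 0$ and use the elementary inequality $(s-t)^{\a-1}\ge c_\a s^{\a-1}$, valid for $s\ge\max(2t,1)$ --- splitting into $\a\ge 1$ (where $s-t\ge s/2$ gives $(s-t)^{\a-1}\ge 2^{1-\a}s^{\a-1}$) and $\a<1$ (where $s-t\le s$ together with a negative exponent gives $(s-t)^{\a-1}\ge s^{\a-1}$). Since $f\ge 0$,
\[
\Gamma(\a)(I^\a_-f)(t)\ge c_\a\intl_{\max(2t,1)}^\infty f(s)\,s^{\a-1}\,ds=\infty
\]
by the assumed divergence of (\ref{for10}) together with local integrability of $f$ on $[1,\max(2t,1)]$.

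The only place requiring genuine care is the tail estimate $(s-T_0)^\a-(s-T_1)^\a=O(s^{\a-1})$ in the first part, where one has to distinguish $\a\le 1$ from $\a>1$ when bounding the intermediate value $\xi^{\a-1}$ produced by the mean value theorem; everything else is mechanical bookkeeping around Fubini.
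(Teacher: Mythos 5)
Your proof is correct and follows essentially the same route as the paper: the a.e.\ finiteness is obtained by interchanging the order of integration over a compact $t$-interval (the paper states this in one line, you supply the Tonelli bookkeeping), and the divergence claim rests on the same comparison $(s-t)^{\a-1}\ge c_\a\,s^{\a-1}$ for $s\ge\max(2t,1)$ with the identical case split at $\a=1$, which you phrase directly where the paper argues by contradiction.
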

\begin{proof}  Note that the lower limit of integration in (\ref{for10})  can be replaced by  any number $a>0$.
To prove the first statement, it suffices to show that
$$\intl_a^b (I^\a_{-}|f|)(t)\,dt<\infty$$ for any $0<a<b<\infty$. This can be done by
 changing the order of integration and using (\ref{for10}). To
prove the second statement, we assume the contrary, that is,
$(I^\a_{-}f)(t)$ is finite, but (\ref{for10}) fails. Choose any
$a>t$ and $N>0$, and let first $\a\le 1$.  Since $(s-t)^{1-\a}\le
s^{1-\a}$, then \bea \int_t^{a+N}\frac{f(s)\,
ds}{(s-t)^{1-\a}}&>&\int_t^{a+N}\frac{f(s)\,
ds}{s^{1-\a}}>\int_a^{a+N}\frac{f(s)\,
ds}{s^{1-\a}}\nonumber\\&=&\left ( \,\int_1^{a+N}-\int_1^{a}\right
)\frac{f(s)\, ds}{s^{1-\a}}.\nonumber\eea If $N \to \infty$, then,
by the assumption, the left-hand side remains bounded whereas the
right-hand side tends to infinity.  For $\a>1$, we proceed as follows: \bea
\int_t^{2a+N}(s-t)^{\a-1}f(s)\,
dy&>&\int_{2a}^{2a+N}(s-t)^{\a-1}f(s)\,
dy\nonumber\\&>&2^{1-\a}\int_{2a}^{2a+N}s^{\a-1}f(s)\, ds
\nonumber\eea (note that $s-t>s-a>s/2$). The rest of the proof is
as above.
\end{proof}

Changing the order
of integration, we easily get \be\label{sg-} I^\a_{\pm}I^\b_{\pm}f=I^{\a+\b}_{\pm}f, \quad
I^\a_{-} \,t^{-\a-\b }\,I^\b_{-}f=t^{-\b }\,I^{\a+\b}_{-}\, t^{-\a}f,\ee
provided that
integrals in either side exist in the Lebesgue sense.
 Here and on, powers of $t$  stand for the corresponding
 multiplication operators (instead of $t$ there may be another letter).
  The second equality
can be alternatively derived from   $ I^\a_{+}I^\b_{+}f=I^{\a+\b}_{+}f$ if we replace variables  by their reciprocals.

Fractional derivatives $\D^\a_{\pm}\vp$  of order $\a>0$ are defined as left
inverses of the corresponding fractional integrals, so that
\be\label{09zsew}\D^\a_{\pm}I^\a_{\pm} f=f.\ee
 Operators  $\D^\a_{\pm}$ may have different analytic forms,  depending on the class of functions.
For example, if $\alpha = m +
\alpha_0, \;
 m = [\alpha], \; 0 \le \alpha_0 <
1$, then
 \be\label{frr+}\Cal D^\a_{\pm} \vp = (\pm d/dt)^{m +1} I^{1 -
\alpha_0}_{\pm} \vp. \ee
The  existence of the fractional derivative  and the equality (\ref{09zsew}) must be justified at each occurrence.

\subsection  { Modified Erd\'elyi-Kober fractional integrals}\label{drtey}

 There exist many modifications and generalizations of the Riemann-Liouville  integrals, for instance,
\[ (I^\a_{+, 2} f)(t)\!=\!
\frac{2}{\Gamma (\alpha)} \intl^t_0 \!\frac{f(s) \, s\,ds} {(t^2 - s^2)^{1-
\alpha}},  \quad (I^\a_{-, 2} f)(t)\! =\! \frac{2}{\Gamma (\alpha)} \intl_t^{\infty}\!
\frac{f(s) \, s\,ds} {(s^2-t^2)^{1- \alpha}}. \]
We call $I^\a_{\pm, 2} f$ the modified Erd\'elyi-Kober fractional integrals. They differ from the classical Erd\'elyi-Kober  integrals, as in \cite{SKM, Sn}, by weight factors. Clearly,
\be\label{as34b}  I^\a_{\pm, 2} f=A^{-1}I^\a_{\pm} Af, \qquad    (Af) (t)=f (\sqrt {t}).
\ee
The following statement
 is a consequence of Lemma \ref{lif}:
 \begin{lemma}\label{lifa2} Let $f$ be a
 locally integrable function on $\bbr_+$.
 Then
 $(I^\a_{-, 2} f)(t)$ is finite for almost all $t>0$ provided
  \be\label{for10z} \intl_1^\infty |f(s)|\, s^{2\a -1}\, ds
<\infty.\ee
If $f$ is non-negative and (\ref{for10z}) fails, then $(I^\a_{-, 2} f)(t)\!=\!\infty$ for every $t\!\ge \!0$.
\end{lemma}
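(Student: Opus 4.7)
The plan is to reduce Lemma \ref{lifa2} directly to Lemma \ref{lif} using the substitution identity (\ref{as34b}). Written pointwise, (\ref{as34b}) says
\[
(I^\a_{-,2} f)(t) = (I^\a_{-} Af)(t^2), \qquad (Af)(s)=f(\sqrt{s}),
\]
so finiteness of $(I^\a_{-,2}f)(t)$ for a.e.\ $t>0$ is equivalent to finiteness of $(I^\a_{-}Af)(u)$ for a.e.\ $u>0$, which is exactly the situation covered by Lemma \ref{lif} applied to $g=Af$.

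First I would verify that $Af$ is a locally integrable function on $\bbr_+$ whenever $f$ is: for $0<a<b<\infty$ the change of variable $s=u^2$ gives
\[
\intl_a^b |(Af)(s)|\, ds = \intl_a^b |f(\sqrt{s})|\, ds = 2\intl_{\sqrt{a}}^{\sqrt{b}} |f(u)|\, u\, du<\infty.
\]
This is the only hypothesis of Lemma \ref{lif} that needs checking, and it places $Af$ in the required class. Then I would apply Lemma \ref{lif}: $(I^\a_{-}Af)(u)$ is finite for almost all $u>0$ precisely when $\intl_1^\infty |(Af)(s)|\, s^{\a-1}\, ds<\infty$. The same substitution $s=u^2$ transforms this last integral into $2\intl_1^\infty |f(u)|\, u^{2\a-1}\, du$, which is condition (\ref{for10z}). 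This establishes the first assertion.

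For the second assertion, note that if $f\ge 0$ then $Af\ge 0$, and failure of (\ref{for10z}) is, by the same substitution, equivalent to failure of (\ref{for10}) for $Af$. The second half of Lemma \ref{lif} then gives $(I^\a_{-}Af)(u)=\infty$ for every $u\ge 0$, and through $(I^\a_{-,2}f)(t)=(I^\a_{-}Af)(t^2)$ this yields $(I^\a_{-,2}f)(t)=\infty$ for every $t\ge 0$, including $t=0$. There is no genuine obstacle here: the whole argument is a change-of-variables reduction to the already-proved Lemma \ref{lif}, and the only care required is to verify local integrability of $Af$ and to track that the endpoint $t=0$ is preserved under the map $t\mapsto t^2$.
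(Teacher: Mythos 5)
Your reduction via $(I^\a_{-,2}f)(t)=(I^\a_{-}Af)(t^2)$ is correct and is precisely how the paper intends the lemma to follow: it simply declares the statement ``a consequence of Lemma \ref{lif}'' through the identity (\ref{as34b}), and your write-up fills in the routine change-of-variables details (local integrability of $Af$, translation of the integrability condition, and the $t=0$ endpoint) accurately.
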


\begin{lemma} The following formulas hold provided that integrals on the right-hand side exist in the Lebesgue sense:
\be\label{sg-1} I^\a_{\pm, 2} I^\b_{\pm, 2} f =I^{\a+\b}_{\pm,2}f,\ee
\be\label{sgwe1}
I^\a_{-,2} \,t^{-2\a-2\b }\,I^\b_{-,2}f=t^{-2\b }\,I^{\a+\b}_{-,2}\, t^{-2\a}f.\ee
\be\label{sgwe2}
t\,I^\a_{-,2} \,t^{-2\a-1}\,I^\a_{-,2}f=2^{2\a}I_{-}^{2\a}f, \quad I^\a_{+,2} \,t^{1-2\a}\,I^\a_{+,2}f=2^{2\a}I_{+}^{2\a}\, t\,f.\ee
\end{lemma}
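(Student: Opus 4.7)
My plan is to prove the three identities by reducing each to a known fact about the Riemann-Liouville operators $I^\a_\pm$. For the first two this reduction is immediate from the intertwining (\ref{as34b}); for the third it is not, and a direct computation will be required.

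For the semigroup property (\ref{sg-1}), the intertwining gives
\[
I^\a_{\pm,2}\,I^\b_{\pm,2}\;=\;A^{-1}I^\a_\pm A\,A^{-1}I^\b_\pm A\;=\;A^{-1}\bigl(I^\a_\pm I^\b_\pm\bigr)A\;=\;A^{-1}I^{\a+\b}_\pm A\;=\;I^{\a+\b}_{\pm,2},
\]
using the first line of (\ref{sg-}). For (\ref{sgwe1}) I would further use the easy observation that, if $M_\g$ denotes multiplication by $t^{-\g}$, then $AM_\g A^{-1}=M_{\g/2}$ (since $t^{-\g}$ becomes $u^{-\g/2}$ in the new variable $u=t^2$); combined with (\ref{as34b}) this reduces the left-hand side of (\ref{sgwe1}) to $I^\a_-\,u^{-\a-\b}\,I^\b_-\tilde f$ with $\tilde f:=Af$, to which the second identity in (\ref{sg-}) applies and produces $u^{-\b}I^{\a+\b}_-\,u^{-\a}\tilde f$; pulling back by $A^{-1}$ gives $t^{-2\b}I^{\a+\b}_{-,2}\,t^{-2\a}f$, as required.

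The doubling identity (\ref{sgwe2}) is subtler, because the same intertwining now produces $I^\a_-\,u^{-\a-\half}\,I^\a_-\tilde f$, and here the weight exponent $\a+\half$ does not match $\a+\b$ for any $\b$ equal to the outer fractional-integration index, so (\ref{sg-}) no longer applies. I would instead expand the two inner integrals, switch the order of integration (absolute convergence being covered by Lemma \ref{lifa2}), and isolate the inner kernel
\[
K(r,s)\;:=\;\int_r^s t^{-2\a}(t^2-r^2)^{\a-1}(s^2-t^2)^{\a-1}\,dt.
\]
The claim (\ref{sgwe2}) then reduces to the closed form $K(r,s)=\frac{2^{2\a-2}\Gamma(\a)^2}{\Gamma(2\a)}\cdot\frac{(s-r)^{2\a-1}}{rs}$, from which the outer factors in the LHS recombine into $\frac{2^{2\a}}{\Gamma(2\a)}\int_r^\infty(s-r)^{2\a-1}f(s)\,ds=2^{2\a}(I^{2\a}_-f)(r)$. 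To establish the closed form I would substitute $t^2=r^2+(s^2-r^2)w$, which recasts the inner integral as $\int_0^1 w^{\a-1}(1-w)^{\a-1}\bigl(r^2+(s^2-r^2)w\bigr)^{-\a-\half}dw$; this is the Euler representation of a Gauss hypergeometric ${}_2F_1\bigl(\a+\half,\a;2\a;-(s^2-r^2)/r^2\bigr)$, and the quadratic transformation for ${}_2F_1(a,b;2b;z)$ together with the trivial reduction ${}_2F_1(p,1;p;y)=(1-y)^{-1}$ collapses it to the claimed elementary form. The ``$+$''-version of (\ref{sgwe2}) follows either by an analogous direct computation or by the reciprocal substitution $t\mapsto 1/t$ noted right after (\ref{sg-}).

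The main obstacle is precisely this hypergeometric collapse, which is in substance equivalent to the Legendre duplication formula $\Gamma(z)\Gamma(z+\half)=2^{1-2z}\sqrt\pi\,\Gamma(2z)$. A slicker, though less elementary, alternative is to pass to the Mellin transform: a short computation (Fubini plus a beta integral) shows $\widetilde{I^\a_{-,2}f}(\sigma)=\frac{\Gamma(\sigma/2)}{\Gamma(\sigma/2+\a)}\,\tilde f(\sigma+2\a)$, so that (\ref{sgwe2}) on the Mellin side becomes the tautological $\frac{\Gamma(\sigma/2)\Gamma((\sigma+1)/2)}{\Gamma(\sigma/2+\a)\Gamma((\sigma+1)/2+\a)}=2^{2\a}\frac{\Gamma(\sigma)}{\Gamma(\sigma+2\a)}$, i.e.\ Legendre duplication applied twice; this confirms that (\ref{sgwe2}) carries no content beyond that single classical identity for $\Gamma$.
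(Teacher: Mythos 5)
Your proposal is correct and follows essentially the same route as the paper's: (\ref{sg-1}) and (\ref{sgwe1}) are reduced to (\ref{sg-}) via the substitution $A$, and (\ref{sgwe2}) is obtained by changing the order of integration and evaluating the inner kernel, where your substitution $t^2=r^2+(s^2-r^2)w$ is exactly the paper's $\eta$-substitution and your explicit use of the quadratic transformation plus ${}_2F_1(p,1;p;y)=(1-y)^{-1}$ merely replaces the paper's citation of Erd\'elyi's tables 2.1.3(10) and 2.8(6). The Mellin-transform computation you append is a clean independent confirmation (and correctly identifies the identity as Legendre duplication), but it is not part of the paper's argument.
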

\begin{proof}
Owing to (\ref{as34b}),  the first two formulas are immediate consequences of  (\ref{sg-}).
To prove the first formula in (\ref{sgwe2}), we change the order of integration  and get
 \[ l.h.s.=\frac{4}{\Gam^2(\a)}\int_t^\infty f(s)\, I(s,t)\, ds,\]
 where
 \bea &&I(s,t=st\intl_t^s (s^2 -r^2)^{\a -1}(r^2 - t^2)^{\a -1} r^{-2\a}\, dr\nonumber\\
 &&\qquad \text {\rm (set $\eta=(r^2 - t^2)/(s^2 - t^2)$) }\nonumber\\
 &&=\frac{s\, (s^2 - t^2)^{2\a-1}}{2t^{2\a}}\intl_0^1 \eta^{\a-1}  (1-\eta)^{\a-1}
 \left (1-\eta \left (1-\frac{s^2}{t^2} \right )\right )^{-\a-1/2}\, d\eta. \nonumber\eea
The last integral represents the hypergeometric function and can be evaluated using formulas
2.1.3(10) and  2.8.(6) from \cite {Er}. This gives
\[
 I(s,t)=\frac{2^{2\a-2}\,\Gam^2(\a)}{\Gam (2\a)}\, (s-t)^{2\a-1},\]
and the result follows. The second formula in (\ref{sgwe2}) can be obtained from the first
one if we replace variables by their reciprocals.
\end{proof}

\begin{remark}{\rm
   Formulas in (\ref{sgwe2}), which express
 compositions of  Erd\'elyi-Kober type integrals through the usual Riemann-Liouville integrals, are not well-known. They occur in the more general context related to Gegenbauer transformations; cf. \cite [p. 120, formula (4.19)]{Dea},  \cite[Theorem 2.2]{vBE}.
 These formulas  play an important role in our consideration; see Remark \ref{pp00bv}}.
\end{remark}

 Fractional derivatives of the  Erd\'elyi-Kober type   can be defined as the  left inverses
$ \Cal D^\a_{\pm, 2} = (I^\a_{\pm, 2})^{-1}$.
By (\ref{as34b}),
\be\label {0u8n}
\Cal D^\a_{\pm, 2}\vp = A^{-1}\Cal D^\a_{\pm} A \vp, \qquad    (Af) (t)=f (\sqrt {t}),\ee
where the  Riemann-Liouville  derivatives $\Cal D^\a_{\pm}$ can be
chosen in different forms, depending on our needs.
For example, if $\alpha = m +
\alpha_0, \;
 m = [\alpha], \; 0 \le  \alpha_0 <
1$, then, formally, (\ref{frr+}) yields
\be\label{frr+z}   \Cal D^\a_{\pm, 2} \vp=(\pm D)^{m +1}\,
I^{1 - \alpha_0}_{\pm, 2}\vp, \qquad D=\frac {1}{2t}\,\frac {d}{dt}.\ee

Inversion of  $I^\a_{-, 2}$ may cause  difficulties.
 Let, for instance, $\alpha = m +\alpha_0, \; m = [ \alpha], \; 0 <\alpha_0 <1$. Then
 the  standard complementation procedure, as in (\ref{frr+z}), can  be inapplicable. Indeed, this formula assumes convergence of the integral
 $I^{1 - \alpha_0}_{-, 2}\vp=I^{1 - \alpha_0}_{-, 2}I^{m+ \alpha_0}_{-, 2}f=I^{m+1}_{-, 2}f$ or, equivalently,
 $\int_1^\infty f(t)\, t^{2m +1}\, dt
<\infty$.
The latter is not guaranteed by (\ref{for10z}),   however, this  obstacle can be circumvented.

\begin{theorem}\label{78awqe} Let $\vp= I^\a_{-, 2} f$, where $f$ is a locally integrable
 function on $\bbr_+$, satisfying  (\ref{for10z}). Then  $f(t)= (\Cal D^\a_{-, 2} \vp)(t)$ for almost all $t\in \bbr_+$, where  $\Cal D^\a_{-, 2} \vp$ has one of the following forms.

\noindent {\rm (i)} If $\a=m$ is an integer, then

\be\label {90bedri}
\Cal D^\a_{-, 2} \vp=(- D)^m \vp, \qquad D=\frac {1}{2t}\,\frac {d}{dt}.\ee

\noindent {\rm (ii)}  If $\alpha = m +\alpha_0, \; m = [ \alpha], \; 0 \le \alpha_0 <1$, then

\be\label{frr+z3} \Cal D^\a_{-, 2} \vp = t^{2(1-\a+m)}
(- D)^{m +1} t^{2\a}\psi, \quad \psi=I^{1-\a+m}_{-,2} \,t^{-2m-2}\,\vp.\ee
Alternatively,
\be\label{frr+z4} \Cal D^\a_{-, 2} \vp=2^{-2\a}\, \Cal D^{2\a}_- \, t\,  I^\a_{-, 2}\, t^{-2\a-1} \, \vp,\ee
where $D^{2\a}_-$ denotes the Riemann-Liouville derivative of order $2\a$, which can be computed according to (\ref{frr+}).

\noindent {\rm (iii)} If, moreover, $\int_1^\infty |f(t)|\, t^{2m +1}\, dt
<\infty$, then
\be\label{frr+z4y0} \Cal D^\a_{-, 2} \vp=(- D)^{m +1} I^{1-\a+m}_{-, 2}\, \vp.\ee
\end{theorem}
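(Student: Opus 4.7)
The plan is to dispatch the three cases in order of increasing delicacy, leveraging the composition identities (\ref{sg-1}), (\ref{sgwe1}), (\ref{sgwe2}) together with Lemma \ref{lifa2} to manage convergence at infinity. For part (i) I would proceed by induction on $m$: the base case $m=1$ follows from Lebesgue's differentiation theorem applied to $(I^1_{-,2} f)(t) = 2\intl_t^\infty f(s)\,s\, ds$, giving $(-D)I^1_{-,2} f = f$ a.e., and the inductive step writes $I^m_{-,2} f = I^1_{-,2} I^{m-1}_{-,2} f$ via (\ref{sg-1}), which is legitimate because Lemma \ref{lifa2} (applied successively) ensures the intermediate operators are finite under (\ref{for10z}). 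Part (iii) is then immediate: the stronger hypothesis $\intl_1^\infty |f(t)|\, t^{2m+1}\, dt < \infty$ is exactly what Lemma \ref{lifa2} requires at exponent $m+1$, whence (\ref{sg-1}) gives $I^{1-\a+m}_{-,2}\vp = I^{m+1}_{-,2} f$ and part (i) at exponent $m+1$ yields (\ref{frr+z4y0}).

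For part (ii), formula (\ref{frr+z3}) would be obtained by rewriting $\psi$ using (\ref{sgwe1}) with the parameter pair $(\a,\b) = (1-\a+m,\,\a)$, for which $2\a + 2\b = 2m+2$:
\begin{equation*}
\psi = I^{1-\a+m}_{-,2}\, t^{-2m-2}\, I^\a_{-,2} f = t^{-2\a}\, I^{m+1}_{-,2}\, t^{-2(1-\a+m)} f.
\end{equation*}
The substitution is legitimate by Lemma \ref{lifa2}, since
\begin{equation*}
\intl_1^\infty s^{-2(1-\a+m)}\, |f(s)|\, s^{2(m+1)-1}\, ds = \intl_1^\infty |f(s)|\, s^{2\a-1}\, ds < \infty
\end{equation*}
by (\ref{for10z}). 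Then $t^{2\a}\psi = I^{m+1}_{-,2}\, t^{-2(1-\a+m)} f$, and part (i) at exponent $m+1$ gives $(-D)^{m+1} t^{2\a}\psi = t^{-2(1-\a+m)} f$, which yields (\ref{frr+z3}). For (\ref{frr+z4}), I would apply the first identity in (\ref{sgwe2}) directly to $\vp = I^\a_{-,2} f$ to obtain
\begin{equation*}
t\, I^\a_{-,2}\, t^{-2\a-1}\, \vp = 2^{2\a}\, I^{2\a}_- f,
\end{equation*}
which is meaningful because (\ref{for10z}) coincides with (\ref{for10}) at index $2\a$, so $I^{2\a}_- f$ is finite a.e. by Lemma \ref{lif}. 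Inverting via $\Cal D^{2\a}_-$ in the form (\ref{frr+}) then produces (\ref{frr+z4}).

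The principal technical obstacle is verifying at each step that the composition identities are applied in the Lebesgue sense under only (\ref{for10z}), which is strictly weaker than the integrability $\intl_1^\infty |f(t)|\, t^{2m+1}\, dt < \infty$ needed for the naive complementation $(-D)^{m+1} I^{1-\a_0}_{-,2} \vp$. This requires careful tracking of growth exponents at infinity at every rearrangement of iterated integrals; the insertion of the power weights $t^{-2m-2}$ and $t^{-2\a-1}$ is precisely the device that rescues convergence where the unweighted version fails, which is exactly what the reformulations (\ref{frr+z3}) and (\ref{frr+z4}) accomplish.
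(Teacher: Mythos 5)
Your proposal is correct and follows essentially the same route as the paper's own proof: part (i) by direct differentiation (which the paper simply calls obvious, while you fill in the induction via Lebesgue differentiation), formula (\ref{frr+z3}) from the weighted composition identity (\ref{sgwe1}) with the second index equal to $1-\a+m$ so that the resulting order is the integer $m+1$ and Lemma \ref{lifa2} guarantees convergence under (\ref{for10z}), formula (\ref{frr+z4}) from (\ref{sgwe2}) together with the observation that the existence conditions for $I^\a_{-,2}f$ and $I^{2\a}_-f$ coincide, and part (iii) from the semigroup property under the stronger integrability hypothesis. No substantive differences from the paper's argument.
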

\begin{proof}  {\rm (i)}  is obvious. To prove (\ref{frr+z3}),
we swap $\a$ with $\b$ in (\ref{sgwe1}) to get
\be\label{nvtyu}
I^{\a+\b}_{-,2}\, t^{-2\b}f=t^{2\a }I^\b_{-,2} \,t^{-2\a-2\b }I^\a_{-, 2} f.\ee
 The existence of
$I^{\a+\b}_{-,2}\, t^{-2\b}f$ is guaranteed by  (\ref{for10z}). Choosing $\b=1-\a+m$,  we obtain (\ref{frr+z3}).
To prove (\ref{frr+z4}), we observe that the existence conditions for the  Erd\'elyi-Kober type integral $I^\a_{-, 2} f$  and
 the Riemann-Liouville integral $I^{2\a}_{-} f$  coincide. Hence,
(\ref{sgwe2}) yields the result. Formula (\ref{frr+z4y0}) follows from the semigroup property
$I^{1-\a_0}_{-, 2}I^{m +\alpha_0}_{-, 2}f=I^{m +1}_{-, 2}f $ owing to (i).
\end{proof}

\begin{remark}\label{pp00bv} {\rm An advantage of the inversion formula  (\ref{frr+z4}), which follows from  (\ref{sgwe2}), in comparison with (\ref{90bedri}),
 (\ref {frr+z3}), and (\ref {frr+z4y0}), is that
 it employs the derivative $d/dt$ rather than  $D=(2t)^{-1} d/dt=d/dt^2$.  Similarly, the second formula in (\ref {sgwe2}) yields
 \[ \Cal D^\a_{+, 2} \vp=2^{-2\a} t^{-1} \D_{+}^{2\a}\,I^\a_{+,2} \,t^{1-2\a}\, \vp.\]
 In particular, if  $\a=k/2$, $k\in \bbn$, then
\be\label{nzxceqq} \Cal D^{k/2}_{+, 2} \vp =2^{-k} t^{-1} \left (\frac {d}{dt}\right )^k \,I^{k/2}_{+,2} \,t^{1-k}\, \vp, \ee
\be\label{nzxceq}  \Cal D^{k/2}_{-, 2} \vp =2^{-k}\left (-\frac {d}{dt}\right )^k \, t \,I^{k/2}_{-, 2} \,t^{-k-1}\,\vp.\ee
This  observation will
 be used in inversion formulas for operators of integral geometry in the next sections.}
\end{remark}

\section  {The $k$-plane transforms }

\setcounter{equation}{0}

Let $\cgnk$  be
 manifold of all non-oriented  $k$-planes $\t$ in
$\rn$; $\gnk$ is the  Grassmann manifold
 of  $k$-dimensional  linear subspaces $\zeta$ of $\rn$; $1 \le k\le n-1$.
 Each $k$-plane $\tau$  is parameterized by the pair
$( \zeta, u)$ where $\zeta \in \gnk$ and $ u \in \zeta^\perp$ (the
orthogonal complement of $\zeta $ in $\rn$). Clearly, $\cgnk$ is a bundle over $\gnk$   with an
$(n-k)$-dimensional fiber.
The manifold  $\cgnk$ is endowed with the product measure $d\t=d\zeta du$,
where $d\z$ is the
 $SO(n)$-invariant measure  on $\gnk$ of  total mass
$1$, and $du$ denotes the usual volume element on $\zeta^\perp$; cf. \cite[Chapter 3]{Matt}.

The $k$-plane transform  $Rf$ of a function $f$ on $\bbr^n$  is defined
 by  $$(Rf)(\tau) \equiv (Rf)(\zeta, u) = \intl_\zeta f(u+v)\,
dv$$ provided that this integral is meaningful. According to the general Funk-Radon-Helgason scheme, to reconstruct $f$ from $\vp=Rf$ we need the following mean value operators
\be (\M_x f)(r)=\intl_{SO(n)} f(x+r\gam e_n)\, d\gam,\qquad r>0,
  \ee
\be(R^*_x \vp)(r)= \intl_{SO(n)} \!  \vp (\gam \bbr^k +x + r\gam e_n) \,
 d\gam. \ee
Here $(\M_x f)(r)$ is the usual spherical mean of $f$,    $\bbr^k= \bbr e_1  \oplus \cdots \oplus \bbr e_k$, and $(R^*_x \vp)(r)$ averages $\vp$ over all $k$-planes at distance $r$ from $x$.

\begin{lemma} \label {mlkii} Let $f$ be a locally integrable function on $\bbr^n\setminus \{o\}$.
 If $f$ is radial, i.e., $f(x)\equiv f_0 (|x|)$, then
\be \label {gyuyyr}(Rf)(\tau)=\pi^{k/2} \,(I^{k/2}_{-,2} f_0 )(r), \qquad r=\dist (o, \t).\ee
More generally,
\be\label {gyuyy} (R^*_xRf)(r)=\pi^{k/2} \,(I^{k/2}_{-,2} \M_x f)(r).\ee
These equalities hold  provided that  expressions on either side are finite when $f$ is
replaced by $|f|$.
\end{lemma}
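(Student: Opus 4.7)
My plan is to treat (\ref{gyuyy}) as the main identity and regard (\ref{gyuyyr}) as the specialization $x=o$ applied to a radial $f$ (for which $\M_o f=f_0$ and $R^*_o Rf(r)=(Rf)(\tau)$ for any $\tau$ with $\dist(o,\tau)=r$). The core calculation then reduces to a single change of variable, together with one careful Fubini step.

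First I would unfold the definitions of $R^*_x$ and $R$ to write
\[
(R^*_x Rf)(r)=\intl_{SO(n)}\intl_{\gamma \bbr^k} f\bigl(x+r\gamma e_n+v\bigr)\,dv\,d\gamma
=\intl_{SO(n)}\intl_{\bbr^k} f\bigl(x+\gamma(re_n+v')\bigr)\,dv'\,d\gamma,
\]
where I have substituted $v=\gamma v'$, using that $\gamma$ preserves $k$-dimensional Lebesgue measure. The hypothesis (finiteness with $f$ replaced by $|f|$) licenses a Fubini interchange of the $\gamma$ and $v'$ integrals. Since $re_n\perp \bbr^k$, for fixed $v'\in\bbr^k$ the vector $y=re_n+v'$ has norm $\rho=\sqrt{r^2+|v'|^2}$, so the $SO(n)$-average is by definition $(\M_x f)(\rho)$.

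Second, I would pass to polar coordinates in $\bbr^k$, writing $v'=s\omega$ with $\omega\in S^{k-1}$ and $dv'=s^{k-1}ds\,d\omega$, obtaining
\[
(R^*_x Rf)(r)=\sigma_{k-1}\intl_0^\infty (\M_x f)\!\left(\sqrt{r^2+s^2}\right) s^{k-1}\,ds.
\]
Then the substitution $t=\sqrt{r^2+s^2}$, $t\,dt=s\,ds$, $s^{k-1}\,ds=(t^2-r^2)^{k/2-1}t\,dt$, converts this into
\[
\sigma_{k-1}\intl_r^\infty (\M_x f)(t)\,(t^2-r^2)^{k/2-1}\,t\,dt
=\frac{2\pi^{k/2}}{\Gamma(k/2)}\cdot\frac{\Gamma(k/2)}{2}\,(I^{k/2}_{-,2}\M_x f)(r)
=\pi^{k/2}(I^{k/2}_{-,2}\M_x f)(r),
\]
which is (\ref{gyuyy}). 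Formula (\ref{gyuyyr}) follows by the specialization noted above, or equivalently by running the same substitution directly on $(Rf)(\zeta,u)=\int_\zeta f_0(\sqrt{|u|^2+|v|^2})\,dv$.

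The only real obstacle is bookkeeping rather than conceptual: I need the Fubini interchange between $SO(n)$ and $\bbr^k$ to be valid, which is precisely the content of the blanket assumption at the end of the statement. Everything else is a direct calculation. I would finish by remarking that the same computation, applied to $|f|$ in place of $f$, shows the equivalence of the finiteness of the two sides in (\ref{gyuyy}), and similarly for (\ref{gyuyyr}) via the criterion of Lemma \ref{lifa2}.
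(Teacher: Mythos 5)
Your proof is correct. Note that the paper itself does not prove this lemma at all: immediately after the statement it simply cites \cite[p.~98, 110]{Ru04b} and \cite[p.~118]{He} and remarks that (\ref{gyuyyr}) is the case $x=0$, $f$ radial, of (\ref{gyuyy}). Your computation is a sound, self-contained version of the standard derivation found in those references: unfolding $R^*_xR$, using the rotation invariance of Haar measure to replace the $SO(n)$-average of $f(x+\gamma(re_n+v'))$ by $(\M_x f)(\sqrt{r^2+|v'|^2})$ (valid because $e_n\perp\bbr^k$), and then passing to polar coordinates with the substitution $t=\sqrt{r^2+s^2}$, which produces exactly the kernel $(t^2-r^2)^{k/2-1}t\,dt$ of $I^{k/2}_{-,2}$ with the constant $\sigma_{k-1}\Gamma(k/2)/2=\pi^{k/2}$. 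The Tonelli/Fubini interchange is correctly tied to the blanket finiteness hypothesis in the statement, and your reduction of (\ref{gyuyyr}) to (\ref{gyuyy}) via transitivity of $SO(n)$ on planes at fixed distance from the origin is legitimate (as is the direct route through $(Rf)(\zeta,u)=\int_\zeta f_0(\sqrt{|u|^2+|v|^2})\,dv$). The only cosmetic caveat is the degenerate case $k=1$, where $\sigma_0=2$ and the ``polar coordinates'' step should be read as summing over the two points of $S^0$; the formula survives unchanged.
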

Formulas (\ref{gyuyyr}) and (\ref{gyuyy}) can be found in  \cite [p. 98, 110] {Ru04b} and  \cite[p. 118]{He},
where the notation for the Erd\'elyi-Kober operators is not used. The first formula is a particular case of the second one, corresponding to $x=0$ and $f$ radial.

\begin{theorem} \label{7zvrt} Let  $f$ be a  locally integrable function on $\bbr^n\setminus \{o\}$. If
\be\label{for10zk} \intl_{|x|>1} \frac{|f(x)|}{|x|^{n-k}}\, dx
<\infty,\ee
then $(Rf)(\tau)$ is finite for almost all $\t\!\in \!\cgnk$.
If  $f$ is nonnegative, radial,  and (\ref{for10zk}) fails, then $(Rf)(\tau)\!=\!\infty$ for every $\t\!\in\! \cgnk$.
\end{theorem}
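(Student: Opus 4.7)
The plan is to deduce Theorem \ref{7zvrt} directly from Lemma \ref{mlkii} (which expresses $R^*_oR|f|$ as an Erd\'elyi--Kober integral of the spherical mean of $|f|$) and Lemma \ref{lifa2} (which characterizes convergence of $I^\a_{-,2}$). The bridge is a polar-coordinates identity: writing $x=s\om$ with $\om\in S^{n-1}$, $s>0$, and recalling that the paper's $\M_o$ uses normalized Haar measure on $SO(n)$,
\[
\intl_{|x|>1}\frac{|f(x)|}{|x|^{n-k}}\,dx=\sigma_{n-1}\intl_1^\infty (\M_o|f|)(s)\,s^{k-1}\,ds,
\]
so condition (\ref{for10zk}) is exactly condition (\ref{for10z}) applied to $\M_o|f|$ with $\a=k/2$.

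For the first statement I would apply (\ref{gyuyy}) with $x=o$ and $f$ replaced by $|f|$ to obtain $(R^*_o R|f|)(r)=\pi^{k/2}(I^{k/2}_{-,2}\M_o|f|)(r)$. By the identification above, Lemma \ref{lifa2} gives finiteness of the right-hand side for almost every $r>0$. Unwinding the definition of the dual,
\[
(R^*_o R|f|)(r)=\intl_{SO(n)}(R|f|)(\gam\bbr^k+r\gam e_n)\,d\gam,
\]
so the integrand is finite for almost every pair $(\gam,r)$. The map $(\gam,r)\mapsto\gam\bbr^k+r\gam e_n$ covers $\cgnk$ up to the null set of planes through the origin, and the product measure $d\gam\,r^{n-k-1}\,dr$ pushes forward, up to a constant, to the canonical $d\t=d\z\,du$ via the polar decomposition $du=r^{n-k-1}\,d\sigma(\om)\,dr$ on each fiber $\z^\perp\cong\bbr^{n-k}$. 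Fubini then yields $(R|f|)(\t)<\infty$ for a.e.\ $\t\in\cgnk$.

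For the second statement radiality makes things transparent: if $f(x)=f_0(|x|)\ge 0$, (\ref{gyuyyr}) gives $(Rf)(\t)=\pi^{k/2}(I^{k/2}_{-,2}f_0)(r)$ with $r=\dist(o,\t)$, and the same polar-coordinate identity shows that failure of (\ref{for10zk}) is exactly failure of (\ref{for10z}) for $f_0$ and $\a=k/2$; the second half of Lemma \ref{lifa2} then forces $(I^{k/2}_{-,2}f_0)(r)=\infty$ for every $r\ge 0$, so $(Rf)(\t)=\infty$ for every $\t$ with no exceptional set. The only real obstacle in the whole argument is the measure-theoretic descent in the sufficiency part: one must check that the redundancy of the $(\gam,r)$-parameterization, coming from the stabilizer $SO(k)\times SO(n-k-1)$ of the pair $(\bbr^k,e_n)$, is null with respect to both source and target measures, so that ``finite a.e.\ in $(\gam,r)$'' transfers to ``finite a.e.\ on $\cgnk$.'' Everything else is a direct appeal to results already in hand.
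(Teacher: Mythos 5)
Your argument is correct and follows essentially the same route as the paper: reduce (\ref{for10zk}) to the hypothesis of Lemma \ref{lifa2} via polar coordinates, invoke (\ref{gyuyy})/(\ref{gyuyyr}) at $x=o$, and transfer ``finite for a.e.\ $(\gam,r)$'' to ``finite for a.e.\ $\t$.'' You are in fact slightly more careful than the paper on two points it leaves implicit --- running the sufficiency argument with $|f|$ rather than $f$, and justifying the pushforward of $d\gam\,r^{n-k-1}dr$ to $d\t$ --- so no gaps.
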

\begin{proof} Let $f_0 (r)=\int_{SO(n)} f(r\gam e_n)\, d\gam$. Owing to  (\ref{for10zk}), we have
 $\int_1^\infty |f_0(r)|\, r^{k -1}\, dr<\infty$ and, therefore, by Lemma \ref{lifa2},
 $(I^{k/2}_{-,2}f_0)(r)$ is finite for almost all $r>0$. On the other hand,
\[\intl_{SO(n)} (Rf) (\gam (\bbr^k + re_n))=(R^*_xRf)(r)\Big |_{x=0}=
\pi^{k/2} \,(I^{k/2}_{-,2}f_0)(r).\]
Hence, $(Rf) (\gam (\bbr^k + re_n))<\infty$ for almost all pairs $(\gam, r) \in SO(n)\times \bbr_+$. It means that
$(Rf)(\tau)$ is finite for almost all $\t\in \cgnk$.
 If, for nonnegative $f\!\equiv \!f_0 (|x|)$, (\ref{for10zk}) fails, then $\int_1^\infty \!f_0(r)\, r^{k -1}\, dr \!\equiv \!\infty$. Hence, Lemma \ref{lifa2} and (\ref{gyuyyr}) yield $\pi^{k/2} \,(I^{k/2}_{-,2} f_0 )(r)\!=\!(Rf)(\tau)\!\equiv \!\infty$.
\end{proof}

According to the general Funk-Radon-Helgason formalism,
our next aim is to reconstruct $(\M_x f)(r)$ from the equality
\be\label{uutrr}(I^{k/2}_{-,2} \M_x f)(r)=\pi^{-k/2} (R^*_xRf)(r)\ee
(cf. (\ref{gyuyy})), and then pass to the limit as  $r\to 0$. We consider two classes of functions $f$
satisfying (\ref{for10zk}). The first one, denoted by $C_\mu (\bbr^n)$, consists of
continuous functions of order $O(|x|^{-\mu})$. If $\mu>k$, then $(Rf)(\tau)$
is finite for every $\t\in \cgnk$.  The second class is $L^p (\bbr^n)$. If
$1\le p<n/k$, then by H\"older's inequality, (\ref{for10zk}) is satisfied, and
therefore, $(Rf)(\tau)$ is finite for almost all $\t\in \cgnk$.\footnote{Different proofs
of this statement can be found in \cite{Ru04b, So, Str}.}
The conditions $\mu>k$ and $1\le p<n/k$ are sharp. It means that there are functions
$f_1 \in C_\mu (\bbr^n)$ and $f_2\in L^p (\bbr^n)$ such that $Rf_1$  and
$Rf_2$ are identically infinite if $\mu\le k$ and $p\ge n/k$, respectively.  For example, in the second case one can take
\be f_2 (x)=\frac{(2+|x|)^{-n/p}}{\log^{1/p+\del} (2+|x|)}, \qquad 0<\del <1/p', \quad 1/p +1/p'=1.\ee
For this function, the integral on the right-hand side of  (\ref{gyuyyr}) diverges.

\begin{lemma}\label{jjyytt} ${}$\hfill

 \noindent {\rm (i)} \ If  $f\in C_\mu (\bbr^n)$, $\mu >k$,  then for every $x\in\bbr^n$ and  $r>0$, the spherical mean  $(\M_x f)(r)$ can be recovered from $\vp=Rf$ by the formula
\be\label{uimuy}
(\M_x f)(r)=\pi^{-k/2} (\Cal D^{k/2}_{-, 2} R^*_x \vp)(r),\ee
where the  Erd\'elyi-Kober differentiation operator
$\Cal D^{k/2}_{-, 2}$ can be computed by (\ref{90bedri}),
 (\ref {frr+z3}), or (\ref{frr+z4}). Under the stronger assumption $\mu >2+2[k/2] \, (>k)$,
 $\Cal D^{k/2}_{-, 2}$ can also be computed by
(\ref {frr+z4y0}).

 \noindent {\rm (ii)}  \ If $f\in L^p (\bbr^n)$, $1\le p<n/k$, then (\ref{uimuy}) holds for every $r>0$ and almost all $x\in\bbr^n$
 with $\Cal D^{k/2}_{-, 2}$  computed by (\ref{90bedri}),
 (\ref {frr+z3}), or (\ref{frr+z4}).
If, moreover, $p<n/(2+2[k/2])\, (<n/k)$,
then  $\Cal D^{k/2}_{-, 2}$ can also be computed by
(\ref {frr+z4y0}).
\end{lemma}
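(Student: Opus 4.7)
The plan is to apply the Erd\'elyi--Kober inversion Theorem~\ref{78awqe} directly to the function $s\mapsto (\M_x f)(s)$ with $\a=k/2$, using the identity $(R^*_x Rf)(r)=\pi^{k/2}(I^{k/2}_{-,2}\M_x f)(r)$ from Lemma~\ref{mlkii}. Dividing by $\pi^{k/2}$ and inverting $I^{k/2}_{-,2}$ yields (\ref{uimuy}), with the four representations (\ref{90bedri}), (\ref{frr+z3}), (\ref{frr+z4}), (\ref{frr+z4y0}) of $\D^{k/2}_{-,2}$ coming straight from parts (i)--(iii) of Theorem~\ref{78awqe}. The argument therefore reduces to verifying, in each of the classes $C_\mu(\bbr^n)$ and $L^p(\bbr^n)$, that $\M_x f$ is locally integrable on $\bbr_+$ and satisfies the tail condition (\ref{for10z}) with $\a=k/2$, as well as, for (\ref{frr+z4y0}), the strengthened tail condition $\int_1^\infty |\M_x f(s)|\,s^{2m+1}\,ds<\infty$ with $m=[k/2]$.

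For part~(i), continuity of $f$ makes $\M_x f$ continuous on $\bbr_+$ and hence locally integrable, while the bound $|f(y)|\le c(1+|y|)^{-\mu}$ propagates to $|\M_x f(s)|\le c_x(1+s)^{-\mu}$. Direct substitution shows that $\int_1^\infty |\M_x f(s)|\,s^{k-1}\,ds$ converges precisely when $\mu>k$ and that $\int_1^\infty |\M_x f(s)|\,s^{2m+1}\,ds$ converges precisely when $\mu>2+2[k/2]$; Theorem~\ref{78awqe} then supplies the three forms (\ref{90bedri}), (\ref{frr+z3}), (\ref{frr+z4}) under the first hypothesis and additionally (\ref{frr+z4y0}) under the second.

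For part~(ii), the polar-coordinate change of variable around $x$ gives
\[
\int_1^\infty (\M_x|f|)(s)\,s^{k-1}\,ds=\sigma_{n-1}^{-1}\int_{|y-x|>1}\frac{|f(y)|}{|y-x|^{n-k}}\,dy,
\]
and H\"older's inequality bounds the right-hand side by $c_{p,k}\|f\|_p$ uniformly in $x$, precisely when the weight $|y-x|^{-(n-k)}$ lies in $L^{p'}(\{|y-x|>1\})$, i.e., when $p<n/k$. Local integrability of $\M_x f$ on $\bbr_+$ is then automatic for every $x$, since $\int_a^b\M_x|f|(s)\,ds\le a^{-(n-1)}\sigma_{n-1}^{-1}\int_{a<|y-x|<b}|f(y)|\,dy<\infty$ by another application of H\"older. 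Repeating the computation with the weight $|y-x|^{2m+2-n}$ in place of $|y-x|^{k-n}$ shows that the strengthened tail condition required for (\ref{frr+z4y0}) holds exactly when $p<n/(2+2[k/2])$. Theorem~\ref{78awqe} applied to $\M_x f$ as a function of $r$ then yields (\ref{uimuy}) for a.e.\ $x$, with the corresponding form of $\D^{k/2}_{-,2}$.

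The main technical point is the H\"older step in~(ii): one has to match the threshold for the $L^{p'}$-integrability of the weight $|y-x|^{\gamma}$ on the exterior of the unit ball with the advertised range of $p$, both for $\gamma=k-n$ and for $\gamma=2m+2-n$, and to obtain an estimate uniform in $x$ so that the exceptional set over which the conclusion may fail is not enlarged beyond what Theorem~\ref{78awqe} already forces. Once this quantitative ingredient is in place the appeal to the Erd\'elyi--Kober inversion theorem is formal, as is the corresponding verification for $C_\mu(\bbr^n)$ in part~(i).
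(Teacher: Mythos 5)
Your overall strategy is exactly the paper's: combine the identity of Lemma \ref{mlkii} with the Erd\'elyi--Kober inversion Theorem \ref{78awqe}, and reduce the matter to local integrability of $\M_x f$ on $\bbr_+$ together with the tail conditions $\int_1^\infty |(\M_x f)(s)|\,s^\lambda\,ds<\infty$, $\lambda=k-1$ (resp.\ $\lambda=2[k/2]+1$ for (\ref{frr+z4y0})). Your polar-coordinate/H\"older computation of these tail integrals is correct and reproduces the paper's thresholds $p<n/k$ and $p<n/(2+2[k/2])$, as does your $C_\mu$ estimate $|(\M_x f)(s)|\le c_x(1+s)^{-\mu}$.

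There is, however, a genuine gap: you establish the formula only for \emph{almost every} $r$, while the lemma asserts it for \emph{every} $r>0$. Theorem \ref{78awqe} returns $(\M_x f)(r)=\pi^{-k/2}(\D^{k/2}_{-,2}R^*_x\vp)(r)$ only for a.e.\ $r$ (with the exceptional set depending on $x$), and likewise Lemma \ref{lifa2} guarantees finiteness of $(I^{k/2}_{-,2}\M_x |f|)(r)$ --- the hypothesis under which the identity of Lemma \ref{mlkii}, i.e.\ (\ref{uutrr}), is valid --- only for a.e.\ $r$. The paper closes both holes. First, it verifies directly that $(I^{k/2}_{-,2}\M_x|f|)(r)<\infty$ for \emph{every} $r>0$; in the $L^p$ case this requires splitting the integral into the ranges $r<|y|<2r$ and $|y|>2r$ and controlling the near part (where the kernel $(s^2-r^2)^{k/2-1}$ is singular at $s=r$ when $k=1$) by Minkowski's inequality for integrals --- your tail estimate, which starts at $s=1$, and your local integrability bound say nothing about this singular near-diagonal contribution at a fixed $r$. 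Second, it upgrades from a.e.\ $r$ to every $r$ by noting that $(\M_x f)(r)$ is continuous in $r$ in the $C_\mu$ case and an $L^p$-valued continuous function of $r$ in the $L^p$ case. Without these two steps your argument yields only the weaker statement that for each admissible $x$ the formula holds for almost every $r$.
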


\begin{proof} We need to  justify (a) the validity  of (\ref{uutrr}) for our classes of functions, and (b) applicability of Theorem \ref{78awqe}.

(a) It suffices to  verify   convergence of $(I^{k/2}_{-,2} \M_x f)(r)$
in (\ref{uutrr}) for nonnegative $f$. If $f\in C_\mu (\bbr^n)$, $\mu >k$,  then
\be\label{mmmjjj} (\M_x \, f)(t)\le
c\intl_{S^{n-1}}\frac{d\theta}{|x+t\theta|^\mu}\le c_x \,t^{-\mu},\ee
where $c_x$ is finite. Hence, $(I^{k/2}_{-,2} \M_x f)(r)<\infty$  for every $x$ and $r$. If $f\in L^p (\bbr^n)$, $1\le p<n/k$, then
\bea &&(I^{k/2}_{-,2} \M_x f)(r)=
\frac{2}{\Gam (k/2)\, \sig_{n-1}}\intl_r^\infty
(t^2-r^2)^{k/2 -1}\,t\, dt\intl_{S^{n-1}} f(x-t \theta ) \, d \theta\quad\nonumber \\
&& =\frac{2}{\Gam (k/2)\, \sig_{n-1}}\Bigg (\,\intl_{r<|y|<2r} + \intl_{|y|>2r}\Bigg )
f(x-y)(|y|^2-r^2)^{k/2 -1}\,\frac{dy}{|y|^{n-2}}. \nonumber\eea
 The first
integral is finite for almost all $x$, because it has
a finite  $L^p$-norm (use Minkowski's
inequality for integrals). The second integral does not exceed $$
c\intl_{|y|>2r}f(x-y)\,\frac{dy}{|y|^{n-k}}, \qquad
c=c(t,\a).$$ By H\"older's inequality, it is bounded for all $x$ when $p<n/k$.
Thus, $(I^{k/2}_{-,2} \M_x f)(r)<\infty$  for every   $r>0$ and almost all $x$.

We have proved that (\ref{uutrr})
holds for  every $r>0$ and all or almost all $x$, depending on whether $f\in C_\mu$ or $f\in L^p $.

(b) To obey Theorem \ref{78awqe}, we have to show that $(\M_x f)(t)$ is
locally integrable on $\bbr_+$ and
\be\label{mmzzza}\intl_1^\infty |(\M_x f)(t)|\, t^\lam\, dt<\infty,\ee
where $\lam=k-1$ for formulas (\ref{90bedri}),
 (\ref {frr+z3}),  (\ref{frr+z4}), and $\lam=2[k/2] +1$ for (\ref {frr+z4y0}).
 If $f\in C_\mu (\bbr^n)$,   both statements are immediate consequences of
(\ref{mmmjjj}) and the assumptions for $\mu$. If $f\in L^p (\bbr^n)$, $p\neq 1$, then, for any $0<a<b<\infty$  and   all $x$,
\bea\intl_a^b |(\M_x f)(t)|\, dt &\le& \frac{1}{\sig_{n-1}}\intl_{a<|y|<b}\frac{|f(x-y)|}{|y|^{n-1}}\, dy
\nonumber\\
&\le& \frac{||f||_p}{\sig_{n-1}}\Bigg (\,\intl_{{a<|y|<b}}\frac{dy}{|y|^{(n-1)p'}}\Bigg )^{1/p'}<\infty.\nonumber\eea
Similarly,
\[ \intl_1^\infty |(\M_x f)(t)|\, t^\lam\, dt\le
\frac{||f||_p}{\sig_{n-1}}\Bigg (\,\intl_{|y|>1}\frac{dy}{|y|^{(n-\lam-1)p'}}\Bigg )^{1/p'}<\infty\]
in view of the assumptions for $\lam$  and  $p$. If $p=1$ the changes in this reasoning are obvious.

To complete the proof, we note that Theorem \ref{78awqe} gives us $(\M_x f)(r)$ only for almost all $r$. However, if $f\in C_\mu (\bbr^n)$, then
$(\M_x f)(r)$ is continuous (in both variables), and if $f\in L^p (\bbr^n)$, then  $(\M_x f)(r)$ is an $L^p$-valued continuous function of $r$.
It follows that the inversion formula (\ref{uimuy}) holds  for every $r>0$ for both $C_\mu$ and $L^p$ spaces. However, in the first case it is valid for all $x\in\bbr^n$, and in the second case for almost  all $x$.
\end{proof}

 Lemma  \ref{jjyytt}  and Theorem \ref{78awqe} imply the following theorem,  containing the main inversion results for $Rf$.

 \begin{theorem}\label{invr1abf}
A function $f \in C_\mu  (\bbr^n)$,  $\mu>k$, can be recovered from $\vp=Rf$ by the  formula
\be\label{nnxxzz}f(x) =  \lim\limits_{t\to 0}\, \pi^{-k/2} (\Cal D^{k/2}_{-, 2} R^*_x \vp)(t),\qquad \ee
where the  limit  is uniform on $\bbr^n$ and the  Erd\'elyi-Kober differential operator
$\Cal D^{k/2}_{-, 2}$ can be computed as follows.

\noindent {\rm (i)} If $k$ is even, then
\be\label {90bedrik}
\Cal D^{k/2}_{-, 2} F=(- D)^{k/2} F, \qquad D=\frac {1}{2t}\,\frac {d}{dt}.\ee

\noindent {\rm (ii)}  For any $1\le k\le n-1$,
\be\label{frr+z3k} \Cal D^{k/2}_{-, 2} F = t^{2-k+2m}
(- D)^{m +1} t^{k}\psi, \quad \psi=I^{1-k/2+m}_{-,2} \,t^{-2m-2}\,F,\ee
where $m=[k/2]$. Alternatively,
\be\label{frr+z4k}  \Cal D^{k/2}_{-, 2} F = 2^{-k}  \left (-\frac{d}{dt}\right)^k\, t\,  I^{k/2}_{-, 2}\, t^{-k-1} \,F.\ee
Under the stronger assumption $\mu >2+2[k/2] \, (>k)$,
 $\Cal D^{k/2}_{-, 2}$ can also be  computed as
\be\label{frr+z4yz}\Cal D^{k/2}_{-, 2} F =(- D)^{m +1} I^{1-\a+m}_{-, 2}\,F.\ee
\end{theorem}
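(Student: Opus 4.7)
The plan is to reduce the theorem to Lemma \ref{jjyytt}(i) together with the standard fact that spherical means of a continuous, decaying function converge uniformly to the function as the radius shrinks to zero. First, I invoke Lemma \ref{jjyytt}(i): for $f\in C_\mu(\bbr^n)$ with $\mu>k$ and $\vp=Rf$, one has
$$(\M_x f)(r)=\pi^{-k/2}(\Cal D^{k/2}_{-,2}R^*_x\vp)(r)$$
for every $x\in\bbr^n$ and every $r>0$, where $\Cal D^{k/2}_{-,2}$ may be realized by any of (\ref{90bedri}), (\ref{frr+z3}), (\ref{frr+z4}), and — under the stronger assumption $\mu>2+2[k/2]$ — also (\ref{frr+z4y0}). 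Substituting $\alpha=k/2$, $m=[k/2]$ in these identities reproduces (\ref{90bedrik})--(\ref{frr+z4yz}) verbatim. For $k$ even one has $\alpha_0=0$, which is the integer case of Theorem \ref{78awqe}(i), yielding the simplest form (\ref{90bedrik}); for $k$ odd one has $\alpha_0=1/2$, and only the formulas (\ref{frr+z3k}), (\ref{frr+z4k}), (\ref{frr+z4yz}) are available.

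Next I pass to the limit $r\to 0^+$. Since $f$ is continuous on $\bbr^n$ and $|f(x)|\le C(1+|x|)^{-\mu}$ with $\mu>0$, $f$ is bounded and vanishes at infinity, hence uniformly continuous on $\bbr^n$. Denoting its modulus of continuity by $\omega_f$, the elementary estimate
$$\bigl|(\M_x f)(r)-f(x)\bigr|\le \sup_{\theta\in S^{n-1}}|f(x+r\theta)-f(x)|\le \omega_f(r)$$
shows that $(\M_x f)(r)\to f(x)$ as $r\to 0^+$ uniformly in $x\in\bbr^n$. Combining this with the identity from Lemma \ref{jjyytt}(i) yields (\ref{nnxxzz}) with uniform convergence, in each of the four forms of $\Cal D^{k/2}_{-,2}$ listed in the theorem.

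No substantive obstacle remains: the analytic heart of the matter — finiteness of $(I^{k/2}_{-,2}\M_x f)(r)$ for the two function classes, validity of the intertwining $(I^{k/2}_{-,2}\M_x f)(r)=\pi^{-k/2}(R^*_xRf)(r)$, and applicability of each of the four inversion formulas from Theorem \ref{78awqe} — has already been settled inside the proof of Lemma \ref{jjyytt}. Theorem \ref{invr1abf} is thus essentially its corollary, the only new ingredient being the uniform modulus-of-continuity argument for the spherical mean at $r=0$, which is the easiest step.
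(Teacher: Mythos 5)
Your proposal is correct and follows the paper's own route: the paper derives Theorem \ref{invr1abf} directly as a corollary of Lemma \ref{jjyytt} and Theorem \ref{78awqe}, with the passage to the limit $r\to 0$ handled exactly by the uniform continuity of $f$ (which, being continuous and $O(|x|^{-\mu})$ with $\mu>0$, is uniformly continuous on $\bbr^n$), so that $(\M_x f)(r)\to f(x)$ uniformly. Your specializations $\a=k/2$, $m=[k/2]$ in the four forms of $\Cal D^{k/2}_{-,2}$ also match the paper's statements verbatim.
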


Note that (\ref{frr+z4k})
 employs usual differentiation $d/dt$ rather than $D$.

The next theorem contains similar results for $L^p$-functions.

 \begin{theorem}\label{invr1p}
A function $f \in L^p (\bbr^n)$,  $1\le p<n/k$, can be recovered from $\vp=Rf$  at almost every  $x\in \rn$ by the  formula
\be\label{nnxxzz}f(x) =  \lim\limits_{t\to 0}\, \pi^{-k/2} (\Cal D^{k/2}_{-, 2} R^*_x \vp)(t),\ee
where the  limit  is understood in the $L^p$-norm.
Here $\Cal D^{k/2}_{-, 2}$ is  computed as in Theorem \ref{invr1abf}, where  (\ref{frr+z4yz})
is applicable under the stronger assumption $1\le p<n/(2+2[k/2])$.
\end{theorem}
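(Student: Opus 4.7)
The plan is to derive Theorem \ref{invr1p} as an immediate consequence of Lemma \ref{jjyytt}(ii) combined with the standard $L^p$-continuity of the spherical mean operator. No new analytic input is needed beyond what has already been established; the task is to assemble the ingredients correctly and address the passage to the limit $r\to 0$ in the $L^p$ setting.

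First, I would quote Lemma \ref{jjyytt}(ii) to obtain, for every $f\in L^p(\bbr^n)$ with $1\le p <n/k$ and every $t>0$, the identity
\[
(\M_x f)(t)=\pi^{-k/2}\,(\Cal D^{k/2}_{-,2} R^*_x \vp)(t)
\qquad \text{for a.e. } x\in\bbr^n,
\]
where $\Cal D^{k/2}_{-,2}$ is any of the representations (\ref{90bedrik}), (\ref{frr+z3k}), (\ref{frr+z4k}); under the stronger assumption $p<n/(2+2[k/2])$ the representation (\ref{frr+z4yz}) is also available, exactly as recorded in Lemma \ref{jjyytt}(ii). This step already does all the hard work: both the existence of $(I^{k/2}_{-,2}\M_x f)(t)$ and the validity of (\ref{uutrr}) for $L^p$-functions were verified there, as well as the integrability conditions on $(\M_x f)(\cdot)$ needed for Theorem \ref{78awqe}.

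Next, I would invoke the classical fact that the family $\{\M_\cdot (t)\}_{t>0}$ is strongly continuous on $L^p(\bbr^n)$ and converges to the identity as $t\to 0^+$. Indeed, Minkowski's integral inequality yields the uniform bound $\|\M_\cdot f(t)\|_p\le \|f\|_p$; since $(\M_\cdot g)(t)\to g$ uniformly as $t\to 0^+$ for every $g\in C_c(\bbr^n)$, a routine $3\varepsilon$-argument based on the density of $C_c(\bbr^n)$ in $L^p(\bbr^n)$ gives
\[
\lim_{t\to 0^+}\|(\M_\cdot f)(t)-f\|_p =0, \qquad 1\le p<\infty.
\]
Substituting into the identity above yields
\[
f=\lim_{t\to 0^+}\pi^{-k/2}\,\Cal D^{k/2}_{-,2} R^*_\cdot \vp(t)
\]
in the $L^p$-norm, which by the standard subsequence argument implies pointwise a.e.\ recovery of $f(x)$ from any subsequence $t_j\to 0$.

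There is no genuine obstacle: the only potential subtlety is the interpretation of the phrase ``recovery at almost every $x\in\bbr^n$ via an $L^p$-norm limit,'' which is resolved exactly by the extraction of an a.e.-convergent subsequence. The whole analytic content---convergence of $I^{k/2}_{-,2}\M_x f$, applicability of Theorem \ref{78awqe}, and the boundary integrability conditions that distinguish (\ref{frr+z4yz}) from (\ref{90bedrik})--(\ref{frr+z4k})---has already been secured in Lemma \ref{jjyytt}(ii), so the proof reduces to a two-line assembly plus an appeal to the continuity of spherical means on $L^p$.
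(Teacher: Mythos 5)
Your proposal is correct and follows essentially the same route as the paper, which states Theorem \ref{invr1p} as an immediate consequence of Lemma \ref{jjyytt}(ii) together with the $L^p$-convergence $\|(\M_\cdot f)(t)-f\|_p\to 0$ as $t\to 0$. The only addition you make is the subsequence argument for pointwise a.e.\ recovery, which is harmless but not needed, since the theorem's ``almost every $x$'' refers to the set where the identity of Lemma \ref{jjyytt}(ii) holds and the limit itself is taken in the $L^p$-norm.
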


\section{The Hyperbolic Space}
 Let $E^{n, 1}, \, n \ge 2$, be the pseudo-Euclidean space of points $x = (x_1, \dots, x_{n+1}) \in \bbr^{n+1}$ with the inner product
\be \label {ijji}
[x,y]=-x_1 y_1 - \ldots - x_ny_n + x_{n+1} y_{n+1}.\ee
We realize the $n$-dimensional hyperbolic space $X$ as the upper sheet of the two-sheeted hyperboloid
$$
\bbh^n = \{ x \in \bbe^{n, 1}: [x, x] = 1, x_{n+1} > 0\}.
$$
Let $\Xi$ be the set of all $k$-dimensional totally geodesic submanifolds $\xi$ of $X$, $1 \le k\le n-1$.
 As usual, $e_1,\dots,e_{n+1}$ denote  the coordinate unit vectors. We set  $\bbr^{n+1}= \bbr^{n-k}\oplus \bbr^{k+1}$, where
 $$
\bbr^{n-k}=  \bbr e_1 \oplus \ldots \oplus \bbr e_{n-k}, \qquad \bbr^{k+1}=\bbr e_{n-k+1} \oplus \ldots \oplus  \bbr e_{n+1},$$
 and identify $\bbr^{k+1}$ with the pseudo-Euclidean space  $E^{k, 1}$.

  In the
following $x_0 =(0,\dots,0,1)$ and $\xi_0 = \bbh^n\cap\bbe^{k,1}= \bbh^k$ denote the origins in $X$ and $\Xi$ respectively; $G=SO_0(n,1)$ is the identity component of the pseudo-orthogonal group $O(n,1)$ preserving the bilinear form (\ref{ijji}); $K=SO(n)$ and $H=SO(n-k) \times SO_0(k, 1)$ are the isotropy subgroups of $x_0$ and $\xi_0$, so that $X = G/K, \; \Xi=G/H$. One can write $f(x) \equiv f(gK), \; \varphi(\xi)\equiv\varphi(gH),\; g\in G$.  The geodesic distance between points $x$ and $y$ in $X$ is defined by $d(x,y) = \cosh^{-1}[x,y]$.
 Each $x \in X$ can be represented in the
hyperbolic polar coordinates as \be x =
\zeta\sinh\om +e_{n+1}\cosh\om,\ee
where $\zeta$ is a point of the unit sphere $S^{n-1}$ in the plane $x_{n+1}=0$ and $0\le \om<\infty$.
In these coordinates the Riemannian measure $dx$ on $X$ has the form $dx=\sinh^{n-1}\om  \, d\om d\sig(\zeta)$, so that
\be\label{aasszx} \intl_X f(x)\, dx\!=\!\intl_0^\infty \sinh^{n-1}\om  \, d\om \intl_{S^{n-1}}\!\! f(\z\sinh \, \om \!+\!e_{n+1}\cosh\, \om)\,d\sig(\zeta).\ee
In particular, if $f$ is $K$-invariant (or zonal), that is, $f(x)\equiv f_0 (\cosh\omega)=f_0 (x_{n+1})$, then
\be\label{ppooii}
\intl_X f(x)\, dx=\sig_{n-1}\intl_1^\infty f_0(s) (s^2 -1)^{n/2 -1}\, ds.\ee
The space $L^p(X)$ (with respect to $dx$ above) is defined in a standard way; $C(X)$ is the
space of continuous functions on $X$; $C_0 (X)$ denotes the space of continuous functions
on $X$ vanishing at infinity.
We also define
\[C_\mu (X)=\{f\in C(X): f(x)= O(x_{n+1}^{-\mu})\}.\]
Of course, it might be natural to compare functions at infinity with powers of the geodesic distance $d(x_0,x)$ (as in the case of $\bbr^n$), however, it is technically more convenient to use powers of $x_{n+1}=\cosh d(x_0,x)$.

For $x \in X$ and $\xi \in \Xi$, we denote by $r_x$ and $r_\xi (\in G)$ arbitrary hyperbolic rotations satisfying $r_x x_0 = x, \; r_\xi \xi_0 = \xi$, and write $$f_\xi (x) = f(r_\xi x), \qquad \vp_x(\xi) = \vp(r_x\xi).$$
The  totally geodesic Radon transform $(R f)(\xi)$ of a sufficiently good function $f$ on $X=\bbh^n$
 is defined by
\be\label {004466}
  (R f)(\xi)=\intl_{\xi} f(x) \, d_\xi x\equiv \intl_{SO_0(k,1)} f_\xi(\gamma x_0)d\gamma, \qquad \xi \in \Xi.\ee

The first question is for which functions the integral (\ref{004466}) exists.

 \begin{theorem} \label{llkkjj} ${}$\hfill

\noindent {\rm (i)} If $f \in L^p(X)$, $1 \le p < (n-1)/(k-1)$, then $(Rf)(\xi)$ is finite for almost
all $\xi \in \Xi$.

\noindent {\rm (ii)} If $f \in C_\mu(X)$, $\mu>k-1$, then $(Rf)(\xi)$ is finite for
all $\xi \in \Xi$.

\end{theorem}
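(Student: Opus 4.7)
The plan is to handle the two parts by distinct methods: (ii) by a direct geometric estimate based on the hyperbolic Pythagorean identity, and (i) by first passing to a radial function via $K$-averaging and then exploiting an explicit Erd\'elyi--Kober-like representation.

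For part (ii), I would fix $\xi\in\Xi$ and let $z_\xi\in\xi$ denote the foot of the perpendicular from $x_0$ onto $\xi$, with $r=d(x_0,z_\xi)$. Since $\xi$ is totally geodesic, the triangle $x_0 z_\xi x$ is right-angled at $z_\xi$ for every $x\in\xi$, so by the hyperbolic Pythagorean identity $\cosh d(x_0,x)=\cosh r\cosh\omega$, where $\omega=d(z_\xi,x)$. Because $x_{n+1}=[x,x_0]=\cosh d(x_0,x)$, the decay hypothesis $|f(x)|\le c\,x_{n+1}^{-\mu}$ becomes $|f(x)|\le c(\cosh r)^{-\mu}(\cosh\omega)^{-\mu}$ on $\xi$. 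Writing $d_\xi x$ in hyperbolic polar coordinates on $\xi\cong\bbh^k$ as in (\ref{aasszx}), one obtains
\[
(R|f|)(\xi)\le c\,\sigma_{k-1}(\cosh r)^{-\mu}\intl_0^\infty(\cosh\omega)^{-\mu}\sinh^{k-1}\omega\,d\omega,
\]
which is finite precisely when $\mu>k-1$.

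For part (i), I would begin by reducing to zonal functions. Set $f^\sharp(x)=\intl_K|f(\k x)|\,d\k$; this is $K$-invariant, hence a function of $x_{n+1}$ alone, say $f_0(x_{n+1})$, and by the $K$-invariance of $dx$ and Minkowski's integral inequality $\|f^\sharp\|_p\le\|f\|_p$. Using the $K$-invariance of $d_\xi x$ and Fubini, for any $\xi_r\in\Xi$ at distance $r$ from $x_0$,
\[
\intl_K(R|f|)(\k\xi_r)\,d\k=(Rf^\sharp)(\xi_r).
\]
Once the right-hand side is shown finite for almost every $r>0$, a further Fubini argument over the parameter $(\k,r)$ gives $(R|f|)(\xi)<\infty$ for almost every $\xi\in\Xi$. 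To estimate $(Rf^\sharp)(\xi_r)$, I would use polar coordinates on $\xi_r$ together with the substitution $u=\cosh r\cosh\omega$ to obtain
\[
(Rf^\sharp)(\xi_r)=\frac{\sigma_{k-1}}{\cosh^{k-1}r}\intl_{\cosh r}^\infty f_0(u)(u^2-\cosh^2 r)^{k/2-1}\,du,
\]
and then apply H\"older's inequality with the factorization $f_0(u)(u^2-1)^{(n/2-1)/p}\cdot(u^2-\cosh^2 r)^{k/2-1}(u^2-1)^{-(n/2-1)/p}$, the first factor having $L^p(du)$-norm controlled by $\|f^\sharp\|_p$ via (\ref{ppooii}). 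Finiteness of the remaining $L^{p'}$-weight is governed by the exponent $p'[(k-2)-(n-2)/p]$ at $u=\infty$, which is less than $-1$ if and only if $p<(n-1)/(k-1)$.

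The main technical point will be the H\"older estimate in the zonal step: one must simultaneously handle the decay at $u=\infty$ (which forces $p<(n-1)/(k-1)$) and, when $k=1$, the mild singularity at $u=\cosh r$. The latter is treated by splitting $(\cosh r,\infty)$ into $(\cosh r,2\cosh r)$, where $u^2-1\ge\sinh^2 r$ keeps the weight $(u^2-1)^{-(n/2-1)/p}$ under control, and $(2\cosh r,\infty)$, where $u^2-\cosh^2 r\ge 3u^2/4$ absorbs the singular factor. Once this bookkeeping is done, the three ingredients---$K$-averaging, polar reduction, and weighted H\"older---combine to give the claimed a.e. finiteness for $L^p$-functions.
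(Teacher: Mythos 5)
Your part (ii) is the paper's own argument: the right angle at the foot of the perpendicular, the hyperbolic Pythagorean identity $\cosh d(x_0,x)=\cosh d(x_0,x_\xi)\cosh d(x_\xi,x)$, and the polar-coordinate integral $\sigma_{k-1}\int_1^\infty (s^2-1)^{k/2-1}s^{-\mu}\,ds$, finite exactly for $\mu>k-1$. Part (i) is where you diverge. The paper proves (i) in one stroke from the weighted duality identity
$\int_\Xi (Rf)(\xi)\cosh^{-n}d(x_0,\xi)\,d\xi=\int_X f(x)\,x_{n+1}^{k-n}\,dx$
(quoted from [BR04]), followed by a single H\"older estimate on $X$ using (\ref{ppooii}); the exponent condition $(n-k)p'>n-1$ is precisely $p<(n-1)/(k-1)$. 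Your route --- $K$-symmetrization to a zonal $f^\sharp$, the Abel-type representation of $(Rf^\sharp)(\xi_r)$, a weighted H\"older bound on $(\cosh r,\infty)$, then Fubini in $(\kappa,r)$ --- is in substance the argument the paper uses for its \emph{second} existence result, Theorem \ref{7zvrt}, except that there the one-dimensional finiteness is delegated to Lemma \ref{lifa2} (a Fubini argument) rather than to H\"older. The paper's proof of (i) is shorter because the duality identity already packages the symmetrization; yours exposes the one-dimensional Erd\'elyi--Kober structure that the rest of the paper exploits, so both are legitimate.

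There is one concrete defect in your endpoint bookkeeping for $k=1$ (the only case in which $(u^2-\cosh^2 r)^{k/2-1}$ is singular at $u=\cosh r$; note that for $k=1$ every $p<\infty$ is admissible). On $(\cosh r,2\cosh r)$ your observation $u^2-1\ge\sinh^2 r$ controls the harmless weight $(u^2-1)^{-(n/2-1)/p}$ but does nothing to the factor $(u^2-\cosh^2 r)^{-1/2}$, whose membership in $L^{p'}$ near the endpoint requires $p'<2$, i.e.\ $p>2$; so the pointwise-in-$r$ H\"older bound fails there for $1\le p\le 2$. The repair is standard and fits your own scheme, which only needs finiteness for almost every $r$: integrate the near-endpoint piece over $r$ in a compact interval and apply Fubini, using that $\int (u^2-\cosh^2 r)^{-1/2}\,dr$ over $\{r:\cosh r<u<2\cosh r\}$ is uniformly bounded. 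This is exactly what Lemma \ref{lifa2} of the paper encapsulates, so you could simply cite it in place of the near-endpoint H\"older step.
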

\begin{proof}  {\rm (i)} \  We make use of the equality
\be \label{hhyydd}\intl_\Xi \frac{(Rf)(\xi)}{\cosh^n
d(x_0,\xi)}\;d\xi=\intl_Xf(x) \, \frac{dx}{x_{n+1}^{n-k}},\ee
which holds provided that either of these  integrals is finite when $f$ is replaced
by  $|f|$; see \cite[p. 48]{BR}. By H\"older's inequality the right-hand side does not exceed $c\,||f||p$, where, by  (\ref{ppooii}),
\[c^{p'}=\intl_X \frac{dx}{x_{n+1}^{(n-k)p'}}=\sig_{n-1}\intl_1^\infty \frac{(s^2 -1)^{n/2 -1}}{s^{(n-k)p'}}\, ds.\]
The latter is finite if  $1 \le p < (n-1)/(k-1)$.

{\rm (ii)} Consider an arbitrary  $k$-geodesic $\xi\in \Xi$. Let $x_\xi$ be the  point in $\xi$ at the minimum distance
from the origin $x_0$ and let $x$ be an arbitrary point in $\xi$.  Then  the angle
$x_0 x_\xi x$ is $90^\circ$ and, by the hyperbolic trigonometry, see, e.g., \cite[p. 102]{Rat},
\be\label{mmbbvv} x_{n+1}=\cosh d(x_0, x)= \cosh d(x_0, x_\xi)\, \cosh d(x_\xi, x).\ee
 If $f \in C_\mu(X)$, then, denoting $\lam_\xi= \cosh\, d(x_0, x_\xi)$ and using (\ref{mmbbvv}), we obtain
\be\label{mmbbvt} |(Rf)(\xi)|\le c\intl_\xi \frac{d_\xi x}{x_{n+1}^\mu}=c\, \lam_\xi^{-\mu} \intl_\xi \frac{d_\xi x}{\cosh^\mu d(x_\xi, x)}.\ee
 Let $\gam \in G$ be a hyperbolic rotation that sends $\xi_0$ to $\xi$ and such that $\gam x_0=x_\xi$. Changing variable $x=\gam y$,
 and using (\ref{ppooii}),  we write the last integral in (\ref{mmbbvt}) as
 \[\intl_{\xi_0} y_{n+1}^{-\mu}\, dy=\sig_{k-1}\intl_1^\infty (s^2 -1)^{k/2 -1} s^{-\mu}\, ds.\]
This integral is finite if $\mu>k-1$.
\end{proof}
 \begin{remark} The restrictions $\mu>k-1$ and $1 \le p < (n-1)/(k-1)$  are sharp; see Example \ref{wwssa} below.
 \end{remark}

Now we introduce the mean value operators, which will be used in the inversion procedure.
 Let
$$
g_\theta\,=\,\left[\begin{array} {ccc} \cosh\theta &0 &\sinh\theta\\ 0 &I_{n-1} &0\\ \sinh\theta &0 &\cosh\theta \end{array} \right],
$$
 where $I_{n-1}$ is the unit matrix of dimension $n-1$. Clearly,
 \[g_\theta x_0=g_\theta e_{n+1}=
 e_1\sinh \theta + e_{n+1}\cosh \theta. \]
The shifted dual
Radon transform of a function $\vp$ on $\Xi$, which averages $\vp$ over all $\xi \in\Xi$ at geodesic distance $\theta$ from $x$,
 is defined by
\be\label {zss}(R^*_x \vp)(r)=\intl_K\vp_x(\gam g^{-1}_\theta\xi_0)\,d\gam,  \qquad  \sinh \theta= r.
\ee The case $r=0$ corresponds to the usual dual
Radon transform.

  Given $x \in \bbh^n$ and
$ s > 1$, let
 \be\label {2.21h}
 (M_x f)(s) = {(s^2-1)^{(1-n)/2}\over \sigma_{n-1}}
 \intl_{\{y \in \bbh^n:\; [x, y]=s\}} f(y) d\sigma  (y)\ee
 be the spherical mean of
$f$ on $X=\bbh^n$, where $ \; d\sigma  (y)$
 stands for the relevant induced Lebesgue measure.

\begin{lemma}\label {7755vv}  \cite [pp. 131-133]{Liz}, \cite[Lemma 2.1]{BR99}.

\vskip 0.3truecm

\noindent {\rm (i)} $\qquad \sup\limits_{s > 1} \| (M_{(\cdot)} f)(s) \|_p \le
\| f \|_p, \qquad f \in L^p (X), \quad  1 \le p \le \infty$.

\vskip 0.3truecm

\noindent {\rm (ii)}  If $f \in L^p (X), \ 1 \le p < \infty$, then
$\lim\limits_{s \to 1} \| (M_{(\cdot)} f)(s) - f \|_p = 0$.  If $f \in C_0 (X)$, then
$(M_x f)(s)\to f(x)$
as $s\to 1$, uniformly on $X$.
 \end{lemma}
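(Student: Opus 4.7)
The plan is to realize the spherical mean (\ref{2.21h}) as a $K$-average. Since $K=SO(n)$ stabilizes $x_0$ and acts transitively on the geodesic sphere of radius $\theta=\cosh^{-1} s$ around $x_0$, and since $g_\theta x_0$ lies on that sphere, a routine check of the induced volume normalization gives
\[
(M_x f)(s) = \intl_K f(r_x k g_\theta x_0)\, dk, \qquad \intl_K dk = 1,
\]
where $r_x\in G$ is any element with $r_x x_0 = x$ (different choices of $r_x$ differ by right multiplication by an element of $K$, which is absorbed into the $dk$-integration). This representation will be the workhorse for both parts.

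For (i) with $p<\infty$, I would lift the $L^p(X)$-norm to $G$ via $\intl_X h(x)\, dx = \intl_G h(gx_0)\, dg$, choose the section $r_{gx_0}=g$, and apply Jensen's inequality to the probability measure $dk$ followed by Fubini:
\[
\|M_{(\cdot)}f(s)\|_p^p \;=\; \intl_G \left|\intl_K f(gkg_\theta x_0)\, dk\right|^p dg \;\le\; \intl_K \intl_G |f(gkg_\theta x_0)|^p\, dg\, dk.
\]
The left-invariant substitution $g\mapsto g k^{-1} g_\theta^{-1}$ collapses the inner integral to $\|f\|_p^p$, and then $\intl_K dk=1$ closes the estimate. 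The case $p=\infty$ is immediate from the $K$-average representation.

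For the pointwise/uniform half of (ii), the crucial observation is that for every $k\in K$,
\[
d(r_x k g_\theta x_0,\, x) = d(k g_\theta x_0,\, x_0) = d(g_\theta x_0,\, x_0) = \theta,
\]
because $r_x$ is an isometry and $k\in K$ fixes $x_0$. Hence
\[
|(M_x f)(s) - f(x)| \le \sup_{d(x,y)=\theta} |f(y)-f(x)|,
\]
and for $f\in C_0(X)$ the right-hand side vanishes uniformly in $x$ as $\theta\to 0$, since any $C_0$-function on the homogeneous space $X$ is uniformly continuous with respect to the hyperbolic distance.

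Finally, the $L^p$-norm continuity is a standard density argument: approximate $f\in L^p(X)$ by $g\in C_c(X)$. The previous paragraph, combined with the fact that the $\theta$-neighborhood of $\supp g$ stays in a fixed compact set as $s\to 1$, yields $\|M_{(\cdot)}g(s)-g\|_p\to 0$; and (i) gives the uniform bound $\|M_{(\cdot)}(f-g)(s)\|_p \le \|f-g\|_p$, so a three-term triangle inequality finishes the proof. I expect no real obstacle; the only step that requires genuine care is the measure-theoretic bookkeeping that identifies the $K$-integral with the induced-volume definition (\ref{2.21h}) on the geodesic sphere, after which both parts reduce to soft consequences of the invariance of Haar measure and an approximation argument.
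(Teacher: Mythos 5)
The paper offers no proof of this lemma: it is quoted from \cite{Liz} and \cite[Lemma 2.1]{BR99}, and the argument in those sources is essentially the one you give --- represent the spherical mean as a $K$-average $(M_xf)(s)=\int_K f(r_x k g_\theta x_0)\,dk$ with $\cosh\theta=s$, deduce (i) from Jensen's inequality over the probability measure $dk$ together with the invariance of Haar measure on $G$, and obtain (ii) from uniform continuity of $C_0$-functions plus the standard density argument via $C_c(X)$. Your proof is correct and complete at the level of detail expected here. Two minor points worth tightening: the substitution $g\mapsto g k^{-1}g_\theta^{-1}$ is a right translation, so the step uses right-invariance of the Haar measure; this is harmless because $G=SO_0(n,1)$ is semisimple, hence unimodular, but it should be said. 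And for $p=\infty$ one should remark that $(M_xf)(s)$ is a priori defined only for almost every $x$ when $f$ is an equivalence class in $L^\infty$, since the geodesic sphere is a null set in $X$; the essential-sup bound is unaffected.
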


For our purposes it is convenient to set
\be\label {bbx3}
(\tilde M_x f)(t)=(1+t^2)^{-1/2}(M_x f)(\sqrt {1+t^2}).
\ee
Then $\lim\limits_{t \to 0}\,(\tilde M_t f)(x)=f(x)$ as in Lemma \ref{7755vv}.

\begin{lemma} \label {mlkii} Let $f$ be a locally integrable function on $\bbh^n\setminus \{x_0 \}$,
 \[
 \omega=d(x_0 ,x), \qquad  \theta=d(x_0 , \xi).\]
 If $f$  is  zonal, $f(x)\equiv f_0 (\cosh\omega)=f_0 (x_{n+1})$, and
 $$\tilde f_0  (t)=(1+t^2)^{-1/2}f_0 (\sqrt {1+t^2}),$$
 then
  \bea\label {eeqq1h}\qquad
(Rf)(\xi)&=&\frac{\sig_{k-1}}{(1+r^2)^{(k-1)/2}}\intl_r^\infty \tilde f_0  (t)  (t^2-r^2)^{k/2 -1} \,
 t\,dt\nonumber\\
\label {eeqq1hn} &=& \frac{\pi^{k/2}}{(1+r^2)^{(k-1)/2}}\, (I^{k/2}_{-,2} \tilde f_0 )(r),\quad r=\sinh \theta.
 \eea
More generally,
\be\label {gyuyyh} (R^*_x \vp)(r)=\frac{\pi^{k/2}}{(1+r^2)^{(k-1)/2}}\, (I^{k/2}_{-,2} \tilde M_x f)(r).\ee
These equalities hold  provided that  expressions on either side are finite when $f$ is
replaced by $|f|$.
\end{lemma}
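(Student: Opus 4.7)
\emph{Plan.} The two identities are closely related: the first is the special case of the second with $x=x_0$ and $f$ zonal. I would begin with the first (a direct integration over $\xi$) and then deduce the second by a Fubini-type computation that reduces the combined operator $R^*_x R$ to an integral of spherical means $(M_x f)$ over $\xi_0$.

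\emph{First formula (zonal $f$).} Let $x_\xi$ be the foot of the perpendicular from $x_0$ to $\xi$, so $d(x_0,x_\xi)=\theta$. By the hyperbolic Pythagorean identity (\ref{mmbbvv}), $x_{n+1}=\cosh\theta\cdot\cosh\tau$ with $\tau=d(x_\xi,x)$. Since $\xi\cong\bbh^k$ is totally geodesic, hyperbolic polar coordinates centered at $x_\xi$ give $d_\xi x=\sinh^{k-1}\tau\,d\tau\,d\sigma_{k-1}$, so
\[
(Rf)(\xi)=\sigma_{k-1}\intl_0^\infty f_0(\cosh\theta\cdot\cosh\tau)\,\sinh^{k-1}\tau\,d\tau.
\]
Substituting $s=\cosh\theta\cdot\cosh\tau$ produces the prefactor $\cosh^{-(k-1)}\theta=(1+r^2)^{-(k-1)/2}$ together with a kernel $(s^2-\cosh^2\theta)^{k/2-1}$; the further substitution $s=\sqrt{1+t^2}$ (noting $\cosh\theta=\sqrt{1+r^2}$) turns the integrand into $\tilde f_0(t)\,(t^2-r^2)^{k/2-1}\,t\,dt$. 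Using $\sigma_{k-1}\Gamma(k/2)/2=\pi^{k/2}$ and the definition of $I^{k/2}_{-,2}$ packages everything into the claimed Erd\'elyi--Kober form.

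\emph{Second formula (general $f$).} Expanding the definitions of $R^*_x$ and $R$,
\[
(R^*_x Rf)(r)=\intl_K\intl_{SO_0(k,1)} f\bigl(r_x k g_\theta^{-1}\gamma x_0\bigr)\,d\gamma\,dk.
\]
I would interchange the order of integration (legitimate under the stated hypothesis that both sides are finite with $|f|$ in place of $f$). For fixed $\gamma$, set $z=g_\theta^{-1}\gamma x_0$; the inner $K$-integral $\intl_K f(r_x k z)\,dk$ is by definition the spherical mean $(M_x f)([x_0,z])$. Using the $G$-invariance of the form and the explicit $g_\theta x_0=e_1\sinh\theta+e_{n+1}\cosh\theta$,
\[
[x_0,g_\theta^{-1}\gamma x_0]=[g_\theta x_0,\gamma x_0]=-y_1\sinh\theta+y_{n+1}\cosh\theta=y_{n+1}\cosh\theta,
\]
where $y=\gamma x_0\in\xi_0\subset\bbr^{k+1}$ so that $y_1=0$. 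By the normalization of Haar measure fixed in (\ref{004466}), the $\gamma$-integral becomes $\intl_{\xi_0}(M_x f)(y_{n+1}\cosh\theta)\,d_{\xi_0}y$. Since this integrand is zonal on $\bbh^k$ about $x_0$, the polar formula (\ref{ppooii}) on $\bbh^k$ reduces it to $\sigma_{k-1}\intl_1^\infty(M_x f)(s\cosh\theta)(s^2-1)^{k/2-1}\,ds$, and the same substitutions $s\mapsto u/\cosh\theta$ followed by $u=\sqrt{1+t^2}$ used in the zonal step deliver the stated identity, with $\tilde M_x f$ replacing $\tilde f_0$.

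\emph{Main obstacle.} The geometric heart of the argument is the clean identity $[x_0,g_\theta^{-1}\gamma x_0]=y_{n+1}\cosh\theta$, which relies on $g_\theta$ moving $x_0$ in the $(e_1,e_{n+1})$-plane \emph{orthogonal} to the subspace $\bbr^{k+1}$ containing $\xi_0$; keeping straight the two splittings $\bbr^{n+1}=\bbr^{n-k}\oplus\bbr^{k+1}$ and $\bbr^{n+1}=\span(e_1,e_{n+1})\oplus\text{rest}$ is the most likely source of computational error. The remaining work is the routine bookkeeping of two substitutions and the identity $\sigma_{k-1}\Gamma(k/2)/2=\pi^{k/2}$.
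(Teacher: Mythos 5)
Your argument is correct, and for the step the paper actually carries out it coincides with the paper's proof: both reduce (\ref{eeqq1h}) and (\ref{gyuyyh}) to the intermediate identities
$(Rf)(\xi)=\sigma_{k-1}\tau^{1-k}\int_\tau^\infty f_0(s)(s^2-\tau^2)^{k/2-1}\,ds$ and
$(R^*_xRf)(\sqrt{\tau^2-1})=\sigma_{k-1}\tau^{1-k}\int_\tau^\infty (M_xf)(s)(s^2-\tau^2)^{k/2-1}\,ds$ with $\tau=\cosh\theta$, and then apply the substitutions $r=\sqrt{\tau^2-1}$, $s=\sqrt{1+t^2}$ together with $\sigma_{k-1}\Gamma(k/2)/2=\pi^{k/2}$. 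The difference is that the paper simply cites these intermediate formulas (Helgason, pp.~121 and 128, and [BR, Lemma 3]), whereas you derive them: the zonal case from the Pythagorean identity (\ref{mmbbvv}) and polar coordinates on $\xi\cong\bbh^k$, and the general case from a Fubini interchange in $\int_K\int_{SO_0(k,1)}f(r_x k g_\theta^{-1}\gamma x_0)\,d\gamma\,dk$, the identification of the inner $K$-average with $(M_xf)([x_0,g_\theta^{-1}\gamma x_0])$, and the computation $[x_0,g_\theta^{-1}y]=[g_\theta x_0,y]=y_{n+1}\cosh\theta$ (valid because $y_1=0$ for $y\in\xi_0\subset\bbr^{k+1}$). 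These derivations check out, so your version is self-contained where the paper is not; the only cosmetic slip is the remark that the $(e_1,e_{n+1})$-plane is ``orthogonal'' to $\bbr^{k+1}$ --- it is not, since $e_{n+1}\in\bbr^{k+1}$; what you actually use, and correctly, is only that $e_1\perp\bbr^{k+1}$.
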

 \begin{proof}
It is known that for $ \tau=\cosh  d(x_0 , \xi)$,
 \be
(Rf)(\xi)=\frac{\sigma_{k-1}}{\tau^{k-1}}\int^\infty_{\tau} f_0(s)(s^2-\tau^2)^{k/2-1}ds;
\ee
see \cite[p. 121] {He}, \cite [Lemma 3]{{BR}}.
 Changing variables
$$r=\sqrt {\tau^2-1}, \qquad s=\sqrt {1+t^2},$$
we obtain (\ref{eeqq1h}).
Similarly,
 \[(R^*_x Rf) (\sqrt {\tau^2-1})=\frac{\sig_{k-1}}{\tau^{k-1}}\intl_\tau^\infty (M_x f)(s) (s^2-\tau^2)^{k/2 -1} \,
 ds;\]
cf. \cite [p. 128]{He} and \cite [formula (2.3)]{Ru02a}. This gives (\ref{gyuyyh}).
 \end{proof}

The following statement provides additional information  about the existence of the Radon transform.

\begin{theorem} \label{7zvrt} Let $f$ be a locally integrable function on $\bbh^n\setminus \{x_0\}$ satisfying
\be\label{for10zkh} \intl_{x_{n+1}>2} \frac{|f(x)|}{x_{n+1}^{n-k}}\,\, dx
<\infty.\ee
Then $(Rf)(\xi)$ is finite for almost all $\xi \in\Xi$ .
If, moreover, $f$ is nonnegative, zonal,  and (\ref{for10zkh}) fails, then $(Rf)(\xi)=\infty$ for every $\xi \in\Xi$.
\end{theorem}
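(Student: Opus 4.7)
The plan is to mirror the proof of the Euclidean analog given in Section~3 above, applying the identity (\ref{gyuyyh}) at the base point $x=x_0$ and invoking Lemma~\ref{lifa2} twice: once to extract the a.e.\ finiteness in the first assertion, and once to produce the ``always infinite'' conclusion in the second. Since neither the hypothesis (\ref{for10zkh}) nor the (non)finiteness of $(Rf)(\xi)$ is affected by replacing $f$ with $|f|$, I may and will assume $f\ge 0$ throughout.

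Specializing (\ref{gyuyyh}) to $x=x_0$ gives
\[
(R^*_{x_0} Rf)(r)=\frac{\pi^{k/2}}{(1+r^2)^{(k-1)/2}}\,(I^{k/2}_{-,2}\,\tilde M_{x_0} f)(r),\qquad r=\sinh\theta,
\]
and by (\ref{zss}) the left side equals $\int_K (Rf)(\gamma g^{-1}_\theta\xi_0)\,d\gamma$. The key algebraic step is to translate (\ref{for10zkh}) into the Erd\'elyi--Kober growth condition (\ref{for10z}) for $\tilde M_{x_0}f$ at $\alpha=k/2$. Using (\ref{aasszx}) together with the identity
\[
(M_{x_0}f)(\cosh\omega)=\frac{1}{\sig_{n-1}}\intl_{S^{n-1}}f(\zeta\sinh\omega+e_{n+1}\cosh\omega)\,d\sigma(\zeta)
\]
(which follows from (\ref{2.21h}) and the description of the induced measure on $\{y_{n+1}=s\}$), one rewrites (\ref{for10zkh}) as
\[
\sig_{n-1}\intl_2^\infty (M_{x_0}f)(s)\,(s^2-1)^{(n-2)/2}\,s^{k-n}\,ds<\infty,
\]
while the substitution $s=\sqrt{1+t^2}$ converts $\intl_1^\infty (\tilde M_{x_0}f)(t)\,t^{k-1}\,dt$ into $\intl_{\sqrt 2}^\infty (M_{x_0}f)(s)(s^2-1)^{(k-2)/2}\,ds$. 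Since for $s\ge 2$ both integrands are $\asymp (M_{x_0}f)(s)\,s^{k-2}$, the two conditions are equivalent. A companion coarea computation shows that $\tilde M_{x_0}f$ is locally integrable on $\bbr_+$, supplying the remaining hypothesis of Lemma~\ref{lifa2}.

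With these preparations, the first assertion is immediate: Lemma~\ref{lifa2} yields $(I^{k/2}_{-,2}\tilde M_{x_0}f)(r)<\infty$ for almost every $r>0$, hence $(R^*_{x_0}Rf)(r)<\infty$ a.e., and a Fubini argument applied to the nonnegative integrand in the $K$-average gives $(Rf)(\gamma g^{-1}_\theta\xi_0)<\infty$ for almost every $(\gamma,\theta)\in K\times\bbr_+$, a parameterization of $\Xi$ modulo a null set. For the second assertion, if $f$ is zonal with profile $f_0$ and (\ref{for10zkh}) fails, the same equivalence gives $\intl_1^\infty \tilde f_0(t)\,t^{k-1}\,dt=\infty$, and the second half of Lemma~\ref{lifa2} forces $(I^{k/2}_{-,2}\tilde f_0)(r)=\infty$ for \emph{every} $r\ge 0$; inserting this into (\ref{eeqq1hn}) yields $(Rf)(\xi)\equiv\infty$.

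The main obstacle I anticipate is the middle paragraph: carrying out the asymptotic comparison cleanly and verifying local integrability of $\tilde M_{x_0}f$ near $t=0$, since the paper only assumes $f$ locally integrable on $\bbh^n\setminus\{x_0\}$ and $f$ may be singular at $x_0$. The borderline exponent $k=1$, for which $(s^2-1)^{(k-2)/2}=(s^2-1)^{-1/2}$ is only barely integrable near $s=\sqrt 2$, warrants a separate check, but the singularity there is harmless because the comparison takes place in the tail region $s\ge 2$.
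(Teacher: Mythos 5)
Your argument is correct and is essentially the paper's own proof: both specialize the identity (\ref{gyuyyh}) to the base point $x_0$, translate the hypothesis (\ref{for10zkh}) via the substitution $s=\sqrt{1+t^2}$ into the tail condition $\int_1^\infty|\tilde f_0(t)|\,t^{k-1}\,dt<\infty$ required by Lemma~\ref{lifa2}, and then apply that lemma twice — once with a Fubini argument over $K\times\bbr_+$ for the a.e.\ finiteness, and once in its ``divergence'' form together with (\ref{eeqq1hn}) for the zonal case. Your extra care about local integrability of $\tilde M_{x_0}f$ near $t=0$ is harmless but not needed, since the paper's convention for ``locally integrable on $\bbr_+$'' only concerns intervals $(a,b)$ with $a>0$.
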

\begin{proof} Let $$f_0 (s)\!=\!\intl_{K} \!f(\gam ( e_1\,\sqrt {s^2-1} + e_{n+1}\,s))\, d\gam,  \quad \tilde f_0  (t)\!=\!(1+t^2)^{-1/2}f_0 (\sqrt {1+t^2}).$$
Then (\ref{for10zkh}) is equivalent to
\[I\equiv \intl_a^\infty | \tilde f_0  (t)| t^{k-1}\, dt<\infty, \quad \forall a>0.\]
Indeed, setting $s=\sqrt {1+t^2}$, we get

\bea I&=&\intl_b^\infty |f_0(s)| \,(s^2 -1)^{k/2 -1}\, ds   \qquad  (b=\sqrt {1+a^2}>1)\nonumber\\
&\le& c_b\intl_b^\infty |f_0(s)| \,(s^2 -1)^{n/2 -1}\, \frac{ds}{s^{n-k}} \nonumber\\
&=&c_b\intl_{\cosh^{-1}b}^\infty \frac{\sinh^{n-1} \om }{\cosh^{n-k} \om}\,d\om \intl_{K} \!|f(\gam ( e_1\,\sinh\, \om  + e_{n+1}\,\cosh\, \om))|\, d\gam\nonumber\\
&=&\frac{c_b}{\sig_{n-1}}\intl_{x_{n+1}>b}\frac{|f(x)|}{x_{n+1}^{n-k}}\, dx.\nonumber\eea
 Calculations in the opposite direction are similar. Thus,
 by Lemma \ref{lifa2}, if  (\ref{for10zkh}) holds, then $(I^{k/2}_{-,2} \tilde f_0 )(r)$
 is finite for almost all $r>0$.
  On the other hand, for $r=\sinh \theta$, Lemma \ref{mlkii} yields
\bea&&\intl_{K} (Rf) (\gam g_\theta^{-1}\xi_0)\, d\gam \equiv (R^*_{x_0} Rf)(r)\nonumber\\
&&=\frac{\pi^{k/2}}{(1+r^2)^{(k-1)/2}}\, (I^{k/2}_{-,2} \tilde M_{x_0} f)(r)=
\frac{\pi^{k/2}}{(1+r^2)^{(k-1)/2}}\, (I^{k/2}_{-,2} \tilde f_0)(r).\nonumber\eea
Since $(I^{k/2}_{-,2} \tilde f_0 )(r)$ is finite for almost all $r>0$, then $(Rf) (\gam g_\theta^{-1}\xi_0)<\infty$
for almost all pairs $(\gam, \theta) \in K\times \bbr_+$. It means that
$(Rf)(\xi)$ is finite for almost all $\xi\in \Xi$.

 If, for $f\!\equiv \!f_0 (x_{n+1})\ge 0$, (\ref{for10zkh}) fails, then $\int_1^\infty \!\tilde f_0(t)\, t^{k -1}\, dt \!\equiv \!\infty$. Hence, by Lemma \ref{lifa2}, $(I^{k/2}_{-,2} \tilde f_0 )(r)\!\equiv \!\infty$ and, by (\ref{eeqq1hn}), $(Rf)(\xi)\!\equiv \!\infty$.
\end{proof}
The following example of application  of Theorem \ref{7zvrt} shows that the restrictions  $1 \le p < (n-1)/(k-1)$ and $\mu>k-1$ in Theorem \ref{llkkjj} are sharp.

\begin{example}\label{wwssa} {\rm Consider the following functions
\be f_1 (x)=\frac{x_{n+1}^{(1-n)/p}}{\log (1+x_{n+1})}, \qquad f_2 (x)=\frac{x_{n+1}^{-\mu}}{\log (1+x_{n+1})}.\ee
By  (\ref{ppooii}),
\[ ||f_1||_p^p=\sig_{n-1}\intl_1^\infty  \frac{(s^2 -1)^{n/2 -1}}{s^{n-1}\,\log (1+s)}\, ds <\infty.\]
However, if $p\ge (n-1)/(k-1)$, then (\ref{for10zkh}) fails  because
\[\intl_{x_{n+1}>2}\frac{f_1(x)}{x_{n+1}^{n-k}}\, dx=
\sig_{n-1}\intl_2^\infty \frac{(s^2 -1)^{n/2 -1}}{s^{(n-1)/p}\,\log (1+s)}\, ds =\infty.\]
Similarly,  $f_2\in C_\mu (\bbh^n)$, however, if $\mu\le k-1$, then
\[\intl_{x_{n+1}>2} \frac{f_2(x)}{x_{n+1}^{n-k}}\, dx=
\sig_{n-1}\intl_2^\infty \frac{(s^2 -1)^{n/2 -1}}{s^{n-k+\mu}\,\log (1+s)}\, ds =\infty.\]
}
\end{example}

As in the preceding section, our
 next aim is to reconstruct $(\tilde M_x f)(r)$ from the equality

\be\label{uutrrh}(I^{k/2}_{-,2} \tilde M_x f)(r)=\pi^{-k/2} (1+r^2)^{(1-k)/2}(R^*_{x} Rf)(r)\ee
 (cf. (\ref{uutrr})) and then pass to the limit as  $r\to 0$.

\begin{lemma}\label{jjyyth} Let $X=\bbh^n$. \hfill

 \noindent {\rm (i)} \ If  $f\in C_\mu X)$, $\mu >k-1$,  then for every $x\in X$ and  $r>0$, the spherical mean  $(\tilde M_x f)(r)$ can be recovered from $\vp=Rf$ by the formula
\be\label{uimuyh}
(\tilde M_x f)(r)=\pi^{-k/2} (\Cal D^{k/2}_{-, 2} (1+r^2)^{(1-k)/2} R^*_x \vp)(r),\ee
where the  Erd\'elyi-Kober differentiation operator
$\Cal D^{k/2}_{-, 2}$ is computed by (\ref{90bedri}),
 (\ref {frr+z3}), or (\ref{frr+z4}). Under the stronger assumption $\mu>2[k/2] +1$,
 $\Cal D^{k/2}_{-, 2}$ can also be  computed by
(\ref {frr+z4y0}).

 \noindent {\rm (ii)}  \ If $f\in L^p (X)$, $1\le p<(n-1)/(k-1)$, then (\ref{uimuyh}) holds for every $r>0$ and almost all $x\in X$
 with $\Cal D^{k/2}_{-, 2}$  computed by (\ref{90bedri}),
 (\ref {frr+z3}), or (\ref{frr+z4}).
If, moreover, $p<(n-1)/(2[k/2] +1)$,
then  $\Cal D^{k/2}_{-, 2}$ can also be  computed by
(\ref {frr+z4y0}).
\end{lemma}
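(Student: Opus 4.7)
The plan is to mirror the proof of Lemma \ref{jjyytt}, working from the hyperbolic identity (\ref{uutrrh}) in place of its Euclidean analogue (\ref{uutrr}). Apply the Erd\'elyi-Kober derivative $\Cal D^{k/2}_{-,2}$ on both sides of (\ref{uutrrh}), invoke Theorem \ref{78awqe} to recover $(\tilde M_x f)(r)$, and pass $r\to 0$ using Lemma \ref{7755vv} in place of the standard continuity of Euclidean spherical means. Two ingredients must be verified: (a) $(I^{k/2}_{-,2}\tilde M_x f)(r)<\infty$ for (almost) every $x$ and every $r>0$; and (b) the function $t\mapsto(\tilde M_x f)(t)$ satisfies the hypotheses of Theorem \ref{78awqe} in the forms corresponding to (\ref{90bedri})--(\ref{frr+z4}), or the sharper form (\ref{frr+z4y0}).

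For $f\in C_\mu(X)$ with $\mu>k-1$, I would first establish the pointwise decay $(M_x f)(s)\le c_x\, s^{-\mu}$ for $s$ large. The argument uses a hyperbolic rotation $r_x$ with $r_x x_0=x$ to identify the sphere $\{y:[x,y]=s\}$ with a rotate of $\{y:y_{n+1}=s\}$, together with the reverse triangle inequality $d(x_0,r_x y)\ge \cosh^{-1}s-d(x_0,x)$ to bound $(r_x y)_{n+1}$ below by $c_x s$. This yields $(\tilde M_x f)(t)\le c_x(1+t^2)^{-(\mu+1)/2}$; Lemma \ref{lifa2} then gives (a) and the tail condition (\ref{for10z}) with $\a=k/2$ precisely when $\mu>k-1$, whereas the stronger integrability underlying (\ref{frr+z4y0}) holds iff $\mu>2[k/2]+1$. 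Local integrability of $\tilde M_x f$ in $t$ is automatic from continuity.

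For $f\in L^p(X)$ with $1\le p<(n-1)/(k-1)$, I would rewrite $(I^{k/2}_{-,2}\tilde M_x f)(r)$ as a spatial integral on $\{y\in X:[x,y]>\sqrt{1+r^2}\}$ against a kernel of the form $([x,y]^2-\tau^2)^{k/2-1}([x,y]^2-1)^{(2-n)/2}$ ($\tau=\sqrt{1+r^2}$), obtained via the coarea formula and (\ref{ppooii}). The domain splits into a near-piece $\tau<[x,y]<2\tau$, handled by Minkowski's integral inequality (using the $G$-invariance of the kernel to see that its $L^1$-mass on the strip is independent of $x$), and a far-piece $[x,y]>2\tau$, where the kernel behaves like $[x,y]^{k-n}$ and H\"older's inequality yields pointwise finiteness iff $p<(n-1)/(k-1)$. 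Local integrability of $\tilde M_x f$ in $t$ and the tail conditions are handled by the same spatial/H\"older argument; the $t^{2m+1}$-weighted variant reduces to an $L^{p'}$-integrability problem on $X$ that dictates the sharper bound $p<(n-1)/(2[k/2]+1)$.

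Once (a) and (b) are established, Theorem \ref{78awqe} yields (\ref{uimuyh}) for almost every $r>0$. For $f\in C_\mu(X)$, joint continuity of $(\tilde M_x f)(r)$ upgrades the identity to every $(x,r)$; for $f\in L^p(X)$, $r\mapsto \tilde M_{(\cdot)} f(r)$ is continuous as an $L^p(X)$-valued function by Lemma \ref{7755vv}(ii), so (\ref{uimuyh}) is valid for every $r>0$ in the $L^p$ sense. The main obstacle is the sharp exponent accounting in the $L^p$ case: one must carefully convert the Erd\'elyi-Kober fractional integral into spatial form, exploit $G$-invariance to reduce to $x=x_0$, and verify that the resulting radial computations produce \emph{exactly} the thresholds $(n-1)/(k-1)$ and $(n-1)/(2[k/2]+1)$. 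By contrast, the smooth weight $(1+r^2)^{(1-k)/2}$ in (\ref{uutrrh}) is harmless: it is absorbed into $\Cal D^{k/2}_{-,2}$ on the right-hand side of (\ref{uimuyh}) and imposes no additional integrability constraint.
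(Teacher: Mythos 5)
Your proposal is correct and its overall architecture coincides with the paper's: both verify (a) convergence of $(I^{k/2}_{-,2}\tilde M_x f)(r)$ (which simultaneously legitimizes (\ref{uutrrh}) via Lemma \ref{mlkii}) and (b) the hypotheses of Theorem \ref{78awqe} with $\lam=k-1$ or $\lam=2[k/2]+1$, then upgrade from a.e.\ $r$ to every $r$ by continuity of $r\mapsto \tilde M_x f$ (pointwise or $L^p$-valued). Your $L^p$ treatment is in substance identical to the paper's: the paper rewrites $I^{k/2}_{-,2}\tilde M_x f$ as the spatial integral (\ref{bbccxz}) with kernel $(y_{n+1}^2-\rho^2)^{k/2-1}(y_{n+1}^2-1)^{1-n/2}$, splits at $y_{n+1}=2\rho$, and invokes Young's inequality for convolutions on $G$ with the endpoint exponents $(r,q)=(1,p)$ and $(r,q)=(p',\infty)$ — which are exactly your Minkowski and H\"older steps — arriving at the same thresholds $(n-1)/(k-1)$ and $(n-1)/\lam$. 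Where you genuinely diverge is the $C_\mu$ case: you derive the pointwise decay $(M_xf)(s)\le c_x s^{-\mu}$ from the triangle inequality and feed it into Lemma \ref{lifa2}, mirroring the Euclidean estimate (\ref{mmmjjj}); the paper instead treats $f\in C_\mu(X)$ by choosing an intermediate exponent $p$ with $(n-1)/\mu<p<(n-1)/\lam$ (possible precisely when $\mu>\lam$) and reusing the same Young's-inequality machinery. Your route is more elementary and closer to the proof of Lemma \ref{jjyytt}; the paper's has the side benefit that its bounds are uniform in $x$ (its constants $A,B,C$ do not depend on $x$), which feeds directly into the uniformity claim of Theorem \ref{invr1abh}, whereas your constant $c_x$ grows like $e^{\mu d(x_0,x)}$ — harmless for the present lemma but not immediately giving uniform control. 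Both arguments yield exactly the stated exponent ranges.
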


\begin{proof} As in Lemma \ref{jjyytt}, we first show   (a)  convergence of $(I^{k/2}_{-,2} \tilde M_x f)(r)$
for nonnegative $f$, and (b)  applicability of Theorem \ref{78awqe}.

(a) We have
\bea I&\equiv&(I^{k/2}_{-,2} \tilde M_x f)(r)\le c\intl_r^\infty (\tilde M_x f)  (t)  (t^2-r^2)^{k/2 -1} \, t\,dt \nonumber\\
&=& c\intl_\rho^\infty (M_x f)(s)  (s^2-\rho^2)^{k/2 -1} \, ds,  \qquad \rho=\sqrt{1+r^2}.\nonumber\eea
Setting $s=\cosh \om$, $\rho=\cosh \theta$, and denoting $f_x (y)=f(\gam_x y)$, where $\gam_x\in G$,  $\gam_x x_0=x$, we continue:
\bea I&\le& c\intl_\theta^\infty (\cosh^2 \om - \cosh^2 \theta)^{k/2 -1}\, \sinh^{2-n}\om \,d\om\intl_{[x,y]=\cosh \om} f(y) \, d\sig (y)\nonumber\\
&=& c\intl_\theta^\infty (\cosh^2 \om - \cosh^2 \theta)^{k/2 -1}\,\sinh \om\, d\om\nonumber\\
&\times&\intl_{S^{n-1}} f_x(\zeta \sinh \om +e_{n+1}\cosh \om)\,  d\sig (\zeta).\nonumber\eea
By (\ref{aasszx})  this gives
\be\label{bbccxz} I\le \intl_{y_{n+1}>\rho}\frac{(y_{n+1}^2-\rho^2)^{k/2 -1}}{(y_{n+1}^2-1)^{n/2 -1}}\,  f_x (y)\, dy.\ee
This integral has a structure of the hyperbolic convolution
\[(Kf)(x)=\intl_{X} f(y) k([x,y])\, dy=\intl_{X} f_x(y) k(y_{n+1})\, dy, \]
which  can be ``lifted" to a convolution operator on $G$.
By Young's inequality \cite[Chapter 5,
 Theorem 20.18]{HR},
\be \label{aaqqwwz} \| Kf\|_q\le \| f\|_p \|k\|_r, \ee
 where $1 \le p \le q \le \infty, \quad 1 - p^{-1}+q^{-1}=r^{-1}$,
\[
 \| k\|^r_r=\sigma_{n-1}\int^\infty_1|k(t)|^r\; \; (t^2-1)^{n/2-1}dt.
\]
We split the integral in (\ref{bbccxz}) in two pieces $I=I_1+I_2$, corresponding to $\rho<y_{n+1}< 2\rho$ and $y_{n+1}> 2\rho$, and
 consider the cases  $f\in C_\mu (X)$ and $f\in L^p (X)$ separately.

 In the first case,  when $f(y)\le c \,y_{n+1}^{-\mu}$, we have
\[ I_1\le c \intl_{\rho<y_{n+1}< 2\rho}\frac{(y_{n+1}^2-\rho^2)^{k/2 -1}}{(y_{n+1}^2-1)^{n/2 -1}}\, \frac{dy}{[\gam_x y, e_{n+1}]^\mu}.\]
This integral can be estimated using (\ref{aaqqwwz}) with $r=1$, $p=q=\infty$, 
as follows: $I_1\le cA$, where
\bea \label{kkzzer}A&=& \intl_{\rho<y_{n+1}< 2\rho}\frac{(y_{n+1}^2-\rho^2)^{k/2 -1}}{(y_{n+1}^2-1)^{n/2 -1}}\, dy\\
&=&c\, \sigma_{n-1}\intl_\rho^{2\rho} (s^2-\rho^2)^{k/2 -1}\,ds <\infty \quad \forall \rho\ge 1\nonumber\eea
(the last equality holds by  (\ref{ppooii})).
 To estimate $I_2$, we apply (\ref{aaqqwwz}) with $r=p'$ and $q=\infty$, to get  $I_2\le c\, B^{1/p}C^{1/p'}$,
\be \label{kkzzer1} B=\intl_{X} y_{n+1}^{-\mu p}\, dy, \qquad C=\intl_{y_{n+1}>2\rho} \frac{(y_{n+1}^2-\rho^2)^{(k/2 -1)p'}}{(y_{n+1}^2-1)^{(n/2 -1)p'}}\,dy.\ee
  The first integral is finite if $p>(n-1)/\mu$, whereas the second one is finite (for every $\rho \ge 1$) if $p< (n-1)/(k-1)$.  Since both conditions are consistent
  when  $\mu >k-1$,  we are done.

Consider the case  $f\in L^p (X)$ and estimate $I_1$ by using (\ref{aaqqwwz}) with $r=1$, $p=q$. This gives
$||I_1||_p\le A||f||_p$, where $A$ has the form (\ref{kkzzer}). For $I_2$, we apply (\ref{aaqqwwz}) with $r=p'$ and $q=\infty$, so that
$I_2\le c\, C^{1/p'}||f||_p$, where $C$ is the same as in (\ref{kkzzer1}). It follows that, for $1\le p<  (n-1)/(k-1)$, $I$ is finite for almost all $x$ and all $\rho \ge 1$.

(b) To justify applicability of Theorem \ref{78awqe}, we have to show that $(\tilde M_x f)(t)$ is
locally integrable on $\bbr_+$ and
\be\label{mmzzzah}\psi (x)\equiv \intl_1^\infty |(\tilde M_x f)(t)|\, t^\lam\, dt<\infty,\ee
where $\lam=k-1$ for formulas (\ref{90bedri}),
 (\ref {frr+z3}),  (\ref{frr+z4}), and $\lam=2[k/2] +1$ for (\ref {frr+z4y0}).
 Proceeding as in Part (a) of the proof of Lemma \ref{jjyyth}, for any $0<a <b<\infty$ we obtain
\[ \intl_a^b |(\tilde M_x f)(t)|\,dt\le c \intl_{a_1<y_{n+1}< b_1} |f_x(y)|\, dy,\]
 for some $1<  a_1< b_1<\infty$ depending on $a$ and $b$.

If $f\in C_\mu (X)$,   $\mu>0$, the last integral is bounded uniformly in $x$ because
$|f_x(y)|\le c\, [\gam_x y, e_{n+1}]^{-\mu}$ and $[\gam_x y, e_{n+1}]\ge 1$. If $f\in L^p (X)$, then, by  (\ref{aaqqwwz}) (with $r=1$, $q=p$),
 the $L^p$-norm of this integral does not exceed $c \,||f||_p$. Hence $(\tilde M_x f)(t)$ is
locally integrable on $\bbr_+$ for almost all $x$.

Similarly we get
\[ \psi (x)\equiv\intl_1^\infty |(\tilde M_x f)(t)|\, t^\lam\, dt\le c\intl_{y_{n+1}>\sqrt {2}}(y_{n+1}^2-1)^{(\lam +1-n)/2}  |f_x(y)|\, dy.\]
If $f\in L^p (X)$, then, as above, $\psi (x)\le c\,c_\lam^{1/p'}\, ||f||_p$, where, for $p>1$,
\be\label {lloohh} c_\lam =\intl_{y_{n+1}>\sqrt {2}}(y_{n+1}^2-1)^{(\lam +1-n)p'/2} \, dy=
\sigma_{n-1}\int^\infty_{\sqrt {2}}(s^2-1)^{\del/2}\, ds,\ee
$\del=(\lam +1-n)p'+n-2$. The last integral is finite provided that
$$
p< (n-1)/\lam=\left \{ \begin{array} {ll} (n-1)/(k-1), & \mbox {\rm if $\lam=k-1$,}\\
(n-1)/(2[k/2] +1), & \mbox {\rm if $\lam=2[k/2] +1$.}\\
\end{array}\right.
$$
If $p=1$, then $\psi (x)\le c\ ||f||_1$ if $\lam \le n-1$. In the case $\lam=k-1$ this inequality does not restrict the range of $k$. If
 $\lam=2[k/2] +1$, then $\lam \le n-1$ is equivalent to $[k/2]\le n/2 -1$ and the case $k=n-1$ with $n$  odd must be excluded.

If $f\in C_\mu (X)$, then
\[ \psi (x)\le c_1\intl_{y_{n+1}>\sqrt {2}}(y_{n+1}^2-1)^{(\lam +1-n)/2}   \frac{dy}{[\gam_x y, e_{n+1}]^\mu}\le c_1\, B^{1/p}c_\lam^{1/p'},\]
where $B$ is the integral from (\ref{kkzzer1}), which is finite if $p>(n-1)/\mu$, and  $c_\lam$ is known from (\ref{lloohh}). It is finite if
 $p< (n-1)/\lam$. Thus, we have to choose $(n-1)/\mu <p<(n-1)/\lam$, which is possible if $\mu>\lam$. If $\lam=k-1$  we arrive at the ``standard'' assumption $\mu >k-1$. If  $\lam=2[k/2] +1$, we get the new restriction $\mu>2[k/2] +1$, as stated in the lemma.

To complete the proof, we note that Theorem \ref{78awqe} gives us $(\tilde M_x f)(r)$ only for almost all $r$. However, if $f\in C_\mu (X)$, then $(\tilde M_x f)(r)$ is continuous (in both variables), and if $f\in L^p (X)$, then  $(\tilde M_x f)(r)$ is an $L^p$-valued continuous function of $r$.
It follows that the inversion formula (\ref{uimuyh}) holds  for every $r>0$ for both $C_\mu$ and $L^p$ spaces. However, in the first case it is valid for all $x\in X$ and in the second case for almost  all $x$.
\end{proof}

Lemma  \ref{jjyyth}  and Theorem \ref{78awqe} yield the main  results for $X=\bbh^n$, which mimic those for $X=\bbr^n$.

 \begin{theorem}\label{invr1abh}
A function $f \in C_\mu  (X)$,  $\mu>k-1$, can be recovered from $\vp=Rf$ by the  formula
\be\label{nnxxzzh}f(x) =  \lim\limits_{t\to 0}\, \pi^{-k/2} (\Cal D^{k/2}_{-, 2} R^*_x \vp)(t),\qquad \ee
in which the  limit  is uniform on $X$ and the  Erd\'elyi-Kober differential operator
$\Cal D^{k/2}_{-, 2}$ can be computed as follows.

\noindent {\rm (i)} If $k$ is even, then
\be\label {90bedrikh}
\Cal D^{k/2}_{-, 2} F=(- D)^{k/2} F, \qquad D=\frac {1}{2t}\,\frac {d}{dt}.\ee

\noindent {\rm (ii)}  For any $1\le k\le n-1$,
\be\label{frr+z3kh} \Cal D^{k/2}_{-, 2} F = t^{2-k+2m}
(- D)^{m +1} t^{k}\psi, \quad \psi=I^{1-k/2+m}_{-,2} \,t^{-2m-2}\,F,\ee
where $m=[k/2]$. Alternatively,
\be\label{frr+z4kh}  \Cal D^{k/2}_{-, 2} F = 2^{-k}  \left (-\frac{d}{dt}\right)^k\, t\,  I^{k/2}_{-, 2}\, t^{-k-1} \,F.\ee
Under the stronger assumption $\mu>2[k/2] +1$,
 $\Cal D^{k/2}_{-, 2}$ can be also computed as
\be\label{frr+z4yh}\Cal D^{k/2}_{-, 2} F =(- D)^{m +1} I^{1-\a+m}_{-, 2}\,F.\ee
\end{theorem}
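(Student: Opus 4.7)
The plan is to deduce the theorem from Lemma \ref{jjyyth}(i) by passing to the limit $r \to 0^+$. That lemma already establishes, for every $x \in X$ and every $r > 0$, a pointwise inversion formula of the form
\[
(\tilde M_x f)(r) = \pi^{-k/2}\bigl(\Cal D^{k/2}_{-, 2}\bigl[(1+(\cdot)^2)^{(1-k)/2} R^*_x \vp\bigr]\bigr)(r),
\]
valid under the standing assumption $\mu > k - 1$, and allowing any of the four representations of $\Cal D^{k/2}_{-, 2}$ from Theorem \ref{78awqe} (with the last one requiring the stronger hypothesis $\mu > 2[k/2] + 1$). The integrability requirements needed to invoke Theorem \ref{78awqe} have already been verified inside the proof of Lemma \ref{jjyyth}, so no extra work is needed on that front. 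The four cases (\ref{90bedrikh}), (\ref{frr+z3kh}), (\ref{frr+z4kh}), (\ref{frr+z4yh}) in the conclusion correspond exactly to the four forms of $\Cal D^{\a}_{-,2}$ listed in Theorem \ref{78awqe}, applied with $\a = k/2$.

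The second step is to show that $(\tilde M_x f)(r) \to f(x)$ uniformly on $X$ as $r \to 0^+$. Since $\mu > k - 1 \ge 0$, one has $f \in C_\mu(X) \subset C_0(X)$, so Lemma \ref{7755vv}(ii) gives $(M_x f)(s) \to f(x)$ uniformly on $X$ as $s \to 1^+$, and the substitution (\ref{bbx3}) transfers this to $(\tilde M_x f)(t) \to f(x)$ uniformly as $t \to 0^+$. Combined with the previous step, this yields the inversion formula with the weighted integrand $(1+t^2)^{(1-k)/2} R^*_x \vp$ in place of $R^*_x \vp$ inside $\Cal D^{k/2}_{-,2}$.

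The final step — and the most delicate one — is to remove the weight $w(r) = (1+r^2)^{(1-k)/2}$ from inside $\Cal D^{k/2}_{-, 2}$ as $r \to 0^+$, thereby recovering (\ref{nnxxzzh}) as stated. Since $w(r) - 1 = O(r^2)$ near $r = 0$, the task reduces to verifying that $\bigl(\Cal D^{k/2}_{-, 2}[(w - 1) R^*_x \vp]\bigr)(r) \to 0$ as $r \to 0^+$ for each of the four representations. This is most transparent from (\ref{frr+z4kh}): the inner Erd\'elyi-Kober integral applied to $(w - 1) R^*_x \vp$ carries an extra factor $O(r^2)$ near $r = 0$, which withstands the subsequent $k$ differentiations and the multiplication by $t$ to leave a positive residual power of $r$. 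The forms (\ref{90bedrikh}), (\ref{frr+z3kh}), (\ref{frr+z4yh}) are handled by analogous estimates using the explicit representations of $\Cal D^{k/2}_{-,2}$.

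The main obstacle I would expect is this third step: dotting the i's on the claim that the weight factor can be stripped off inside $\Cal D^{k/2}_{-, 2}$ in the limit. The uniform convergence in step two is standard once the $C_0$ reduction is noted, and step one is a direct quotation of Lemma \ref{jjyyth}(i) combined with the enumeration supplied by Theorem \ref{78awqe}.
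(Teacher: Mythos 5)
Your first two steps reproduce the paper's (one-line) argument exactly: Theorem \ref{invr1abh} is obtained by quoting Lemma \ref{jjyyth}(i) for each fixed $r>0$, matching the four forms of $\Cal D^{k/2}_{-,2}$ with those of Theorem \ref{78awqe}, and then letting $r\to 0$, using $C_\mu(X)\subset C_0(X)$, Lemma \ref{7755vv}(ii) and (\ref{bbx3}) to get $(\tilde M_x f)(r)\to f(x)$ uniformly. The paper offers nothing beyond this, so the weight $(1+t^2)^{(1-k)/2}$ of (\ref{uimuyh}) is tacitly still present inside the operator in (\ref{nnxxzzh}); its disappearance from the displayed formula is best read as a misprint (compare Theorem \ref{koom2}, where the corresponding factor $\lam_X(r)=(1+r^2)^{(k-1)/2}$ is explicitly retained for $X=\bbh^n$).

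Your third step, which tries to prove that the weight may genuinely be dropped, contains the real gap --- and in fact the claim is false. The forms (\ref{frr+z3kh}), (\ref{frr+z4kh}), (\ref{frr+z4yh}) are nonlocal: each contains an integral $I^{\beta}_{-,2}$ extending from $t$ to $\infty$, and $w(s)-1=(1+s^2)^{(1-k)/2}-1$ is not small for large $s$ (it tends to $-1$ when $k>1$), so the asserted ``extra factor $O(r^2)$'' in the inner Erd\'elyi--Kober integral is simply not there. Even for the local form (\ref{90bedrikh}) the argument fails, because each application of $D=(2t)^{-1}\,d/dt$ removes exactly two powers of $t$ and thus consumes the $O(t^2)$ gain from $w(t)-1$. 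Concretely, for $k=2$, writing $\phi=R^*_x\vp$ and $w(t)=(1+t^2)^{-1/2}$,
\[
-D\bigl[(1-w)\phi\bigr](t)=\frac{w'(t)}{2t}\,\phi(t)-\frac{1-w(t)}{2t}\,\phi'(t)\;\longrightarrow\;-\tfrac12\,\phi(0)\qquad (t\to 0),
\]
since $w'(t)/2t\to -1/2$ and $(1-w(t))/2t\to 0$. Hence the unweighted limit equals $f(x)-(2\pi)^{-1}(R^*_xRf)(0)$, not $f(x)$. The correct reading of (\ref{nnxxzzh}) must therefore keep the factor $(1+t^2)^{(1-k)/2}$ inside $\Cal D^{k/2}_{-,2}$, exactly as in (\ref{uimuyh}); with that correction your steps one and two already constitute a complete proof and coincide with the paper's.
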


For $L^p$-functions we have the following.

 \begin{theorem}\label{invr1ph}
A function $f \in L^p (X)$,  $1\le p<(n-1)/(k-1)$, can be recovered from $\vp=Rf$  at almost every point $x$ by the  formula
\be\label{nnxxzzh}f(x) =  \lim\limits_{t\to 0}\, \pi^{-k/2} (\Cal D^{k/2}_{-, 2} R^*_x \vp)(t),\ee
where the  limit  is understood in the $L^p$-norm.
Here $\Cal D^{k/2}_{-, 2}$ is  computed as in Theorem \ref{invr1abh}, where formula (\ref{frr+z4yh})
is applicable under the stronger condition $1\le p<(n-1)/(2[k/2] +1)$.
\end{theorem}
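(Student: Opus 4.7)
The proof is a direct corollary of Lemma~\ref{jjyyth}(ii) together with the $L^p$-continuity of the hyperbolic spherical mean furnished by Lemma~\ref{7755vv}, so my plan is to assemble these two ingredients and verify that the limit passage is valid in the $L^p$-norm.

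First I would invoke Lemma~\ref{jjyyth}(ii), which (under the hypothesis $1\le p<(n-1)/(k-1)$) yields the pointwise identity
\[
(\tilde M_x f)(r) = \pi^{-k/2}\,(\Cal D^{k/2}_{-, 2} R^*_x \vp)(r),
\]
valid for every $r>0$ and almost every $x\in X$, with the Erd\'elyi--Kober derivative $\Cal D^{k/2}_{-,2}$ admitting any of the forms (\ref{90bedrikh}), (\ref{frr+z3kh}), (\ref{frr+z4kh}); the alternative form (\ref{frr+z4yh}) is permitted only under the stricter restriction $p<(n-1)/(2[k/2]+1)$, because this is precisely the range in which the auxiliary integrability (\ref{mmzzzah}) with $\lambda = 2[k/2]+1$ was established inside the lemma.

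Next I would pass to the limit $t \to 0^{+}$ in the $L^p$-norm. Recall from (\ref{bbx3}) that $(\tilde M_x f)(t) = (1+t^2)^{-1/2}(M_x f)(\sqrt{1+t^2})$. Writing
\[
(\tilde M_{(\cdot)} f)(t) - f = (1+t^2)^{-1/2}\bigl[(M_{(\cdot)} f)(\sqrt{1+t^2}) - f\bigr] + \bigl[(1+t^2)^{-1/2}-1\bigr] f,
\]
I would estimate the first summand by Lemma~\ref{7755vv}(ii), which gives $\|(M_{(\cdot)} f)(s)-f\|_p\to 0$ as $s\to 1^+$, and bound the second summand by $|(1+t^2)^{-1/2}-1|\cdot\|f\|_p$, which also tends to $0$. (Lemma~\ref{7755vv}(i) controls any uniform estimates along the way.) Thus $\|(\tilde M_{(\cdot)} f)(t) - f\|_p \to 0$ as $t\to 0^{+}$.

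Finally, combining the two steps: for each fixed $t>0$, the functions $x\mapsto \pi^{-k/2}(\Cal D^{k/2}_{-,2} R^*_x \vp)(t)$ and $x\mapsto (\tilde M_x f)(t)$ coincide for a.e.\ $x\in X$, and in particular represent the same element of $L^p(X)$. Hence
\[
\bigl\|\pi^{-k/2}(\Cal D^{k/2}_{-,2} R^*_{(\cdot)} \vp)(t) - f\bigr\|_p \;\longrightarrow\; 0 \qquad (t\to 0^{+}),
\]
which is the inversion formula claimed in the theorem, and the pointwise a.e.\ assertion follows from the almost-everywhere equality at each $t>0$.

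The argument contains no serious obstacle because the substantive analysis (the convergence of $I^{k/2}_{-,2}\tilde M_x f$ via hyperbolic convolution and Young's inequality, and the verification of the tail condition (\ref{mmzzzah})) was already carried out in Lemma~\ref{jjyyth}. The only delicate point is the sharp range for the auxiliary form (\ref{frr+z4yh}); this is precisely the arithmetic of comparing $(n-1)/(k-1)$ with $(n-1)/(2[k/2]+1)$ that is inherited directly from the lemma.
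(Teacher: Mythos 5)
Your proposal is correct and follows exactly the route the paper intends: the paper gives no separate proof of Theorem \ref{invr1ph}, asserting only that it follows from Lemma \ref{jjyyth} and Theorem \ref{78awqe}, and your two steps --- the identity of Lemma \ref{jjyyth}(ii) for each fixed $t>0$, then the $L^p$-limit $\|(\tilde M_{(\cdot)}f)(t)-f\|_p\to 0$ obtained from Lemma \ref{7755vv}(ii) together with the definition (\ref{bbx3}) --- are precisely that assembly. One caveat worth recording: Lemma \ref{jjyyth}(ii) actually recovers $(\tilde M_x f)(r)$ as $\pi^{-k/2}\bigl(\Cal D^{k/2}_{-,2}\,(1+r^2)^{(1-k)/2}R^*_x\vp\bigr)(r)$, with the weight $(1+r^2)^{(1-k)/2}$ sitting inside the nonlocal operator $\Cal D^{k/2}_{-,2}$; your restatement of the lemma (and the theorem's own display (\ref{nnxxzzh})) drops this weight, which cannot simply be discarded even as $t\to 0$ since $\Cal D^{k/2}_{-,2}$ integrates over all of $(t,\infty)$ --- so the weight should be carried through the limit, an inconsistency already present in the paper's statement rather than a defect of your argument.
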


\section{The case $X=S^n$}

We recall some basic facts \cite{He, Ru02b}. Let $\;\bbr^{n+1}=\bbr^{k+1} \times \bbr^{n-k}$,
$$
\bbr^{k+1}=\bbr e_{1} \oplus \ldots \oplus  \bbr e_{k+1}, \qquad \bbr^{n-k}=  \bbr e_{k+2} \oplus \ldots \oplus \bbr e_{n+1};$$
 $\sig_n=|S^n|=2\pi^{(n+1)/2}/\Gam ((n+1)/2); \;
 \xi_0=S^k$ is the unit sphere in $\bbr^{k+1}$;  $d (\cdot, \cdot)$ denotes the geodesic distance on $S^n$;
 $\; G=SO(n+1)$; $ \;
K=SO(n)$ and $K^\prime=SO(k+1) \times SO(n-k)$ are  stabilizers
of $e_{n+1}$ and $\, \xi_0$ respectively. The set   $\Xi$ of all $k$-dimensional totally
geodesic submanifolds $\xi$ of $X=S^n$ can be identified with
 the  Grassmann manifold $G_{n+1, k+1}=G/K^\prime$ of all
$(k+1)$-dimensional linear subspaces of $\bbr^{n+1}$. The $G$-invariant probability
measure $d\xi$ on $\Xi$  is defined in a canonical way.

 The  totally geodesic Radon transform $(R f)(\xi)$ of a sufficiently good function $f$ on $S^n$
 is defined by
\be
  (R f)(\xi)=\intl_{\xi} f(x) \, d_\xi x, \qquad \xi \in \Xi,\ee
where  $d_\xi x$ stands for  the usual Lebesgue  measure on $\xi$. Clearly, if $f$ is an odd function, then $Rf\equiv 0$.
We will be dealing with $L^p_e(X)=L^p_e(S^n)$, the subspace of even functions in $L^p(S^n)$.

Another important object  is the shifted dual
Radon transform,  which averages a function $\vp$ on $\Xi$ over all $k$-geodesics $\xi$ at  a fixed distance $\theta$ from $x\in S^n$.
To define this operator, we denote by $r_x\in SO(n+1)$ an
 arbitrary rotation satisfying $r_x e_{n+1}=x$  and set $\vp_x(\xi)=\vp(r_x \xi)$.
 For $\theta \in [0, \pi/2]$,
let $g_\theta$ be the rotation in the plane
$(e_{k+1}, e_{n+1})$ with
the matrix $\left[\begin{array} {cc} \sin\theta &\cos\theta\\ -\cos\theta
&\sin \theta
 \end{array} \right]$.

For $r=\cos\, \theta$, the shifted dual
Radon transform of a function $\vp$ on $\Xi$ is defined by
\be\label {ufus}
(R^*_{x} \vp )(r)= \intl_{d(x, \xi) = \theta} \varphi(\xi) \,
d\mu(\xi)\equiv \intl_{K} \vp_x (\rho
g_\theta^{-1} \xi_0) \,d\rho.\ee
The case $\theta=0$ corresponds to the usual dual
Radon transform \cite{He}. This definition is slightly different from the similar one for the hyperbolic space, however, it allows us to avoid unnecessary technicalities.
\begin{lemma} \label{aawwde} \cite[p. 479]{Ru02b} For all $1\le p \le \infty$,
$$
 \| Rf\|_{(p)} \le \sigma_n^{-1/p} \|
f\|_p, \qquad \| (R^*_{(\cdot)}  \vp)(r)\|_p \le \sigma_n^{1/p} \| \vp \|_{(p)},
$$
where $ \| \cdot \|_{(p)}$ and $\| \cdot \|_{p}$ denote the
 $L^p$-norms on $\Xi$ and $X=S^n$, respectively.
\end{lemma}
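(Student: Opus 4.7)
Both bounds will follow from Jensen's inequality combined with the homogeneous-space Fubini identity for $X=G/K$ and $\Xi=G/K'$. I plan to prove the estimate for $R^*$ first because the parametrization already displayed in (\ref{ufus}),
$$(R^*_x\vp)(r)=\intl_K\vp_x(\rho g_\theta^{-1}\xi_0)\,d\rho,$$
presents $R^*$ as an average with respect to the normalized Haar measure $d\rho$ on the compact group $K$.

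\emph{Bound for $R^*$.} Jensen's inequality applied to the probability measure $d\rho$ yields the pointwise estimate
$$|(R^*_x\vp)(r)|^p\le\intl_K|\vp(r_x\rho g_\theta^{-1}\xi_0)|^p\,d\rho.$$
Integrating in $x\in X$ and using $X=G/K$, the iterated integral $\intl_X\intl_K F(r_x\rho)\,d\rho\,dx$ equals $\sigma_n$ times the normalized Haar integral of $F$ over $G$. Applying this with $F(g)=|\vp(gg_\theta^{-1}\xi_0)|^p$, invoking right-invariance of $dg$ to absorb the factor $g_\theta^{-1}$, and using $\Xi=G/K'$ (so that $\intl_G|\vp(g\xi_0)|^p\,dg=\|\vp\|_{(p)}^p$), one arrives at $\|(R^*_{(\cdot)}\vp)(r)\|_p^p\le\sigma_n\|\vp\|_{(p)}^p$, which is the second inequality of the lemma.

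\emph{Bound for $R$.} I would apply Jensen's inequality to the normalized $k$-dimensional surface measure of $\xi$ to obtain a pointwise bound of the form $|(Rf)(\xi)|^p\le C\intl_\xi|f|^p\,d_\xi x$, then integrate over $\Xi$ and invoke the Fubini-type identity
$$\intl_\Xi\Big(\intl_\xi g(x)\,d_\xi x\Big)\,d\xi=c\intl_X g(x)\,dx,$$
which holds because both sides define $G$-invariant positive linear functionals on $C(X)$ and hence are proportional; the constant $c$ is fixed by inserting $g\equiv 1$. Alternatively, the first inequality can be deduced from the second by duality, via the adjoint relation $\intl_\Xi(Rf)\vp\,d\xi=\text{const}\cdot\intl_X f\cdot(R^*\vp)\,dx$ and the $L^p$--$L^{p'}$ pairing.

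\emph{Main obstacle.} The only nontrivial point is bookkeeping: verifying that the several normalizations in play ($d\xi$ probability, $d\rho$ probability, $dx$ of total mass $\sigma_n$, and $d_\xi x$ Lebesgue on $\xi$) combine to yield exactly $\sigma_n^{\mp 1/p}$. This reduces to evaluating the Fubini identity above at the constant function $g\equiv 1$; the endpoints $p=1$ and $p=\infty$ give convenient consistency checks for the final constants.
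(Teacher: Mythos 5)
The paper gives no proof of this lemma: it is quoted verbatim from \cite{Ru02b}, so the only comparison available is with the standard argument, which is precisely what you propose. Your treatment of $R^*$ is complete and correct: Jensen with respect to the probability measure $d\rho$ on $K$, the decomposition $\int_{S^n}\int_K F(r_x\rho)\,d\rho\,dx=\sigma_n\int_G F(g)\,dg$ for $G/K=S^n$ (the inner $K$-average also makes the integrand independent of the choice of $r_x$), bi-invariance of Haar measure on the compact group $G$ to absorb $g_\theta^{-1}$, and the identification $\int_G|\vp(g\xi_0)|^p\,dg=\|\vp\|_{(p)}^p$ for $\Xi=G/K'$ with normalized $d\xi$; the case $p=\infty$ is the trivial sup bound. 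For $R$, your method is equally sound, but the consistency check you yourself prescribe (insert $f\equiv 1$) reveals the one genuine subtlety: with the definition of $R$ given in Section 5 of this paper, where $d_\xi x$ is the \emph{unnormalized} Lebesgue measure on the great $k$-sphere $\xi$ (total mass $\sigma_k$), Jensen gives the constant $\sigma_k^{p-1}$ and the Fubini identity gives $c=\sigma_k/\sigma_n$, so one lands on $\|Rf\|_{(p)}\le\sigma_k\,\sigma_n^{-1/p}\|f\|_p$, and indeed $f\equiv 1$ shows the stated bound $\sigma_n^{-1/p}\|f\|_p$ fails by exactly the factor $\sigma_k$. The constant as quoted presupposes the convention of \cite{Ru02b}, in which the measure on each $\xi$ is normalized to be a probability measure. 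This is a normalization mismatch between the lemma as imported and the paper's own definition of $R$, not a flaw in your reasoning; your argument, run with the normalized measure, yields the lemma exactly. The duality route you sketch for the first inequality is also viable and meets the same bookkeeping issue in the constant of the adjoint relation.
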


 We  need one more averaging operator:
 \be\label {2.21}
 ({\bbm}_x f)(s) = {(1-s^2)^{(1-n)/2}\over \sigma_{n-1}}
 \intl_{\{y \in \bbs^n:\; x \cdot y=s\}} f(y) \,d\sigma  (y), \quad  s \in (-1, 1).\ee
  The integral (\ref{2.21})  is the
mean value of
$f$ on the planar section of $S^n$ by the hyperplane $x \cdot y=s$, and
$ \; d\sigma  (y)$
 stands for the induced Lebesgue measure on this section. It is known that $||({\bbm}_{(\cdot)} f)(s)||_p\le ||f||_p$ and
$\lim\limits_{s\to 1}({\bbm}_x f)(s)=f(x)$ in the $L^p$-norm for all $1\le p\le \infty$.\footnote{Here and on we identify $L^\infty(S^n)$
 with the space  $C(S^n)$  of continuous functions.}

  \begin{lemma}  \label{hhffvb} Let $f\in L^p_e (S^n)$, $1\le p \le \infty$.
 Then
\bea
(R^*_x Rf)(r)&=&\frac{2\sig_{k-1}}{r^{k-1}}\int_0^r (r^2-s^2)^{k/2 -1} ({\bbm}_
{x} f)(s)\, ds\nonumber\\
\label{eeqq1hes}&=& \frac{2\pi^{k/2}}{r^{k-1}}\, (I^{k/2}_{+,2} g_x)(r), \quad g_x(s)=s^{-1}(\bbm_x f)(s).
 \eea
\end{lemma}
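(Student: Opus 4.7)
The plan is first to reduce to the case $x=e_{n+1}$ by a rotation, then to apply Fubini on the $K\times\xi_0$ integral so as to recognize the inner $K$-average as a planar mean $\bbm$, and finally to perform a one-dimensional change of variable.

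Setting $\tilde f(y)=f(r_x y)$, one has $(R\tilde f)(\xi)=(Rf)(r_x\xi)$ from the definition of $R$, whence $(R^*_x Rf)(r)=(R^*_{e_{n+1}} R\tilde f)(r)$. A substitution $y\mapsto r_x y$ in (\ref{2.21}) gives $(\bbm_{e_{n+1}}\tilde f)(s)=(\bbm_x f)(s)$, so it suffices to prove the formula with $x=e_{n+1}$; below I rename $\tilde f$ as $f$. By Fubini applied to the definition (\ref{ufus}),
\[
(R^*_{e_{n+1}} Rf)(r)=\intl_{\xi_0}\Bigl(\,\intl_K f(\rho g_\theta^{-1}y)\,d\rho\Bigr)\,d_{\xi_0} y.
\]
Since $K=SO(n)$ fixes $e_{n+1}$, the $K$-orbit of any $z\in S^n$ is the latitude sphere $\{y\in S^n:\, e_{n+1}\cdot y=e_{n+1}\cdot z\}$, so the inner integral equals $(\bbm_{e_{n+1}} f)(e_{n+1}\cdot g_\theta^{-1} y)$. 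Because $g_\theta$ acts nontrivially only in the $(e_{k+1},e_{n+1})$-plane, $g_\theta e_{n+1}=\cos\theta\, e_{k+1}+\sin\theta\, e_{n+1}$, and since $y\in\xi_0\subset\bbr^{k+1}$ has no $e_{n+1}$-component, $e_{n+1}\cdot g_\theta^{-1} y=y\cdot g_\theta e_{n+1}=r\,y_{k+1}$ with $r=\cos\theta$.

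Parameterizing $y\in\xi_0=S^k$ by $y_{k+1}=\cos\phi$, $\phi\in[0,\pi]$, so that $d_{\xi_0}y=\sigma_{k-1}\sin^{k-1}\phi\,d\phi$, and then substituting $s=r\cos\phi$, one converts the resulting integral into one over $[-r,r]$ with kernel $r^{1-k}(r^2-s^2)^{k/2-1}$. The map $y\mapsto -y$ in (\ref{2.21}) and the evenness of $f$ yield $(\bbm_x f)(-s)=(\bbm_x f)(s)$; the integrand is therefore even in $s$ and the interval halves, yielding the first equality. The second equality is a straightforward rewriting using the definition of $I^{k/2}_{+,2}$ together with $\sigma_{k-1}=2\pi^{k/2}/\Gamma(k/2)$.

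I do not anticipate any serious obstacle: the argument is structurally identical to the hyperbolic case (Lemma \ref{mlkii}) with trigonometric functions replacing the hyperbolic ones, while the evenness of $f$ compensates for the sign change of $y_{k+1}\in[-1,1]$, converting the symmetric interval $[-r,r]$ into $[0,r]$ and producing the Erd\'elyi--Kober operator of type $I^\a_{+,2}$ in place of $I^\a_{-,2}$. The only point requiring care is not forgetting the evenness hypothesis on $f$, which is essential: without it the integral over $[-r,r]$ would not reduce to the stated one-sided integral.
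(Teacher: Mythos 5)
Your argument is correct. Note, however, that the paper does not prove Lemma \ref{hhffvb} at all: it simply cites \cite[p.~140]{He} and \cite[p.~485]{Ru02b}, so there is no in-text proof to compare against; your derivation supplies the standard computation behind those references. The chain of steps checks out: the reduction to $x=e_{n+1}$ via $\tilde f=f\circ r_x$ and the rotation invariance of $d_\xi x$ and of the planar sections in (\ref{2.21}); the identification of $\int_K f(\rho z)\,d\rho$ with $(\bbm_{e_{n+1}}f)(e_{n+1}\cdot z)$ by uniqueness of the $K$-invariant probability measure on the latitude sphere; the computation $e_{n+1}\cdot g_\theta^{-1}y=g_\theta e_{n+1}\cdot y=r\,y_{k+1}$ using $y_{n+1}=0$ on $\xi_0$; the substitution $s=r\cos\phi$ producing the kernel $r^{1-k}(r^2-s^2)^{k/2-1}$ on $[-r,r]$; and the use of evenness of $f$ (hence of $(\bbm_xf)(s)$ in $s$) to fold the integral onto $[0,r]$ with the factor $2$. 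The rewriting as $\frac{2\pi^{k/2}}{r^{k-1}}I^{k/2}_{+,2}g_x$ with $g_x(s)=s^{-1}(\bbm_xf)(s)$ and $\sigma_{k-1}=2\pi^{k/2}/\Gamma(k/2)$ is also right. The only point you pass over silently is the justification of Fubini for $f\in L^p$, $p<\infty$: one should apply the computation first to $|f|$ and invoke the boundedness of $R$ and $R^*_{(\cdot)}(r)$ (Lemma \ref{aawwde}) to see that everything is absolutely convergent for a.e.\ $x$, which is exactly the remark the paper makes immediately after the lemma; with that observation added, your proof is complete.
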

This statement can be found  in \cite [p. 140]{He} and \cite [p. 485]{Ru02b}.  Note that, by Lemma \ref {aawwde},  $(R^*_x R)(r)$ represents a bounded operator on $L^p$  for every $ r \in (0, 1)$. Hence, the  integral
 (\ref{eeqq1hes}) is absolutely convergent  for every $r\in (0,1)$ and represents an $L^p$-function of $x$.

Lemma  \ref{hhffvb} implies the following inversion result.

\begin{theorem}\label{invrhys}  Let $X=S^n$.
A function $f \in L^p_e (X)$, $1\le p<\infty$,  can be recovered from $\vp=Rf$ by the formula
\be\label{90ashel}
f(x) \!=  \! \lim\limits_{s\to 1}  \left (\frac {1}{2s}\,\frac {\partial}{\partial s}\right )^k \left [\frac{\pi^{-k/2}}{\Gam (k/2)}\intl_0^s
(s^2 \!- \!r^2)^{k/2-1} \,(R^*_{x} \vp)(r) \,r^k\,dr\right ].\ee
In particular, for $k$ even,
\be\label{90ashele}
f(x) \!=  \! \lim\limits_{s\to 1} \frac{1}{2\pi^{k/2}} \left (\frac {1}{2s}\,\frac {\partial}{\partial s}\right )^{k/2}[s^{k-1}(R^*_{x} \vp) (s)].\ee
Altenatively,
\be\label{90ashys}
f(x) \!=  \! \lim\limits_{s\to 1} \, \left (\frac {\partial}{\partial s}\right )^k \left [\frac{2^{-k}\, \pi^{-k/2}}{\Gam (k/2)}\intl_0^s
(s^2 \!- \!r^2)^{k/2-1} (R^*_{x} \vp)(r) \,dr\right ].\ee
The  limit in these formulas is understood in the $L^p$-norm. If $f\in C_e(X)$, it can be interpreted in the $\sup$-norm.
\end {theorem}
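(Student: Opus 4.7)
The whole argument proceeds from Lemma~\ref{hhffvb}, applying the Erd\'elyi-Kober inversion machinery of Section~2 and then passing to the limit $s\to 1$ via the $L^p$ and sup-norm properties of $\bbm_x$ recorded after (\ref{2.21}). With $\vp=Rf$, set
\[F(r)=\frac{r^{k-1}}{2\pi^{k/2}}\,(R^*_x\vp)(r).\]
Lemma~\ref{hhffvb} becomes the single Abel-type identity $F=I^{k/2}_{+,2}g_x$ with $g_x(s)=s^{-1}(\bbm_x f)(s)$. Because $S^n$ is compact and $\|\bbm_x f\|_p\le\|f\|_p$ uniformly in $s\in(0,1)$, every Erd\'elyi-Kober integral that arises below is absolutely Lebesgue convergent, so no decay hypothesis at infinity is needed and one is free to invert $I^{k/2}_{+,2}$ by any of the formulas collected in Theorem~\ref{78awqe} and Remark~\ref{pp00bv}.

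To derive (\ref{90ashel}) I would first inflate the order: apply an extra $I^{k/2}_{+,2}$ to $F=I^{k/2}_{+,2}g_x$, use the semigroup rule (\ref{sg-1}) to rewrite the result as $I^{k/2}_{+,2}F=I^{k}_{+,2}g_x$, and then invert $I^{k}_{+,2}$ by $\Cal D^{k}_{+,2}=D^{k}$ from Theorem~\ref{78awqe}(i). Writing
\[I^{k/2}_{+,2}F(s)=\frac{2}{\Gamma(k/2)}\intl_0^s(s^2-r^2)^{k/2-1}F(r)\,r\,dr,\]
the intrinsic weight $r$ multiplies the $r^{k-1}$ built into $F$ to give $r^k$, and the numerical constants collapse to $\pi^{-k/2}/\Gamma(k/2)$, yielding (\ref{90ashel}) directly. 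Formula (\ref{90ashele}), valid when $k$ is even, is the one-step version of this: here $k/2\in\bbn$, so Theorem~\ref{78awqe}(i) gives $\Cal D^{k/2}_{+,2}=D^{k/2}$, which applied to $F=I^{k/2}_{+,2}g_x$ recovers $g_x$ immediately.

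Formula (\ref{90ashys}) uses instead the alternative representation (\ref{nzxceqq}) from Remark~\ref{pp00bv},
\[\Cal D^{k/2}_{+,2}\psi=2^{-k}s^{-1}\Bigl(\frac{d}{ds}\Bigr)^k I^{k/2}_{+,2}\bigl[r^{1-k}\psi\bigr](s).\]
Applied with $\psi=F$, the factor $r^{1-k}$ cancels the $r^{k-1}$ in $F$, so the inner Erd\'elyi-Kober integral reduces to $I^{k/2}_{+,2}(R^*_x\vp)(s)/(2\pi^{k/2})$; multiplying through by $s$ converts $g_x(s)$ into $(\bbm_x f)(s)$, and rewriting the remaining Erd\'elyi-Kober integral in Lebesgue form produces the announced expression with ordinary $d/ds$ in place of the composite operator $D$.

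The only delicate step is the passage to the limit $s\to 1$ inside the fractional derivatives. For $f\in L^p_e(S^n)$ with $1\le p<\infty$ one has $\|(\bbm_x f)(s)-f\|_p\to 0$, and the footnote identifying $L^\infty_e(S^n)$ with $C_e(S^n)$ turns the same statement into uniform convergence in the continuous case. The intermediate objects $I^{k/2}_{+,2}F(s)$ and $I^{k/2}_{+,2}(R^*_x\vp)(s)$ are $L^p$-valued (resp.\ continuous) functions of $s\in(0,1)$ in view of Lemma~\ref{aawwde}, so the differentiations are well defined on the open interval and the limit as $s\to 1$ is read off directly from $g_x(s)$ or from $s\,g_x(s)=(\bbm_x f)(s)$. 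This is the only place where genuine care is required; everything else is algebraic bookkeeping with the identities of Section~2.
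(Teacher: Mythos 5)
Your argument is correct and follows essentially the same route as the paper's own proof: you set up the same function $\psi_x(r)=2^{-1}\pi^{-k/2}r^{k-1}(R^*_x\vp)(r)$ (your $F$), use Lemma~\ref{hhffvb} together with the semigroup property (\ref{sg-1}) to raise the order to $I^{k}_{+,2}$ and invert by $D^k$ for (\ref{90ashel})--(\ref{90ashele}), and invoke (\ref{nzxceqq}) for (\ref{90ashys}). Your extra attention to the $s\to1$ limit via the $L^p$-continuity of $\bbm_x f$ only makes explicit what the paper leaves implicit.
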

\begin{proof}
By (\ref{eeqq1hes}),
$$(I^{k/2}_{+,2} g_x)(r)= \psi_x (r), \qquad \psi_x (r)=2^{-1} \pi^{-k/2} r^{k-1}(R^*_{x}\vp) (r).$$
Hence, by the semigroup property (\ref{sg-1}), $I^{k}_{+,2} g_x=I^{k/2}_{+,2} \psi_x$, and
therefore, $g_x=D^k I^{k/2}_{+,2} \psi_x$, $D=(1/2s)\,(\partial/\partial s)$. This gives the first two formulas.
Furthermore, by  (\ref{nzxceqq}),
\[(M_x f)(s)=  2^{-k-1} \pi^{-k/2} \left (\frac {\partial}{\partial s}\right )^k \,  (I^{k/2}_{+,2} R^*_{x} \vp)(s),\]
and the third formula follows.
\end{proof}

We observe that the first two formulas in Theorem \ref{invrhys} are well known for infinitely differentiable functions;
cf.  Theorem 1.22 in \cite [p. 141]{He}\footnote {Coincidence of the constants in both theorems follows by duplication formula for gamma functions.}. The third formula, containing usual derivative $(\partial/\partial s)^k$, is new.

\section{On Helgason's formula. Open problem}

The following interesting result is due to Helgason \cite[p. 116]{He}.
\begin{theorem}\label{invrzz} If $k$ is even, then  the $k$-plane transform on $\bbr^n$  can be inverted by the formula
\be\label{lloon} f(x)=c \left [\partial_r^k (R^*_x Rf)(r)\right ]_{r=0}, \qquad c=const,  \qquad  \partial_r=\frac {\partial}{\partial r}\,. \ee
\end{theorem}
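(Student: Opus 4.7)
The starting point is the identity $(R^*_x Rf)(r)=\pi^{k/2}(I^{k/2}_{-,2}\M_x f)(r)$ from Lemma \ref{mlkii}. Writing $k=2m$, the Erd\'elyi-Kober integral is reduced to a classical Riemann-Liouville integral by the substitution $u=s^2$, $\rho=r^2$. Specifically, setting $\tilde g(u)=(\M_x f)(\sqrt u)$, one obtains
\[
(I^{m}_{-,2}\M_x f)(r)=(I^{m}_{-}\tilde g)(r^2).
\]
Thus $(R^*_x Rf)(r)=\pi^{k/2}H(r^2)$, where $H(\rho):=(I^{m}_{-}\tilde g)(\rho)$.

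Next I use the Taylor expansion of $H$ at $\rho=0$ (justified for, say, Schwartz class $f$, since then $\M_x f$ is smooth and even in $r$, so $\tilde g$ is $C^\infty$ up to $u=0$ with enough decay to make the Weyl-type integral meaningful). This gives
\[
(R^*_x Rf)(r)=\pi^{k/2}\sum_{j=0}^{m}\frac{H^{(j)}(0)}{j!}\,r^{2j}+o(r^{2m}).
\]
Applying $\partial_r^{k}=\partial_r^{2m}$ and evaluating at $r=0$ kills every term except $j=m$, yielding
\[
\bigl[\partial_r^{k}(R^*_x Rf)(r)\bigr]_{r=0}=\pi^{k/2}\,\frac{(2m)!}{m!}\,H^{(m)}(0).
\]

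Finally, for $m\in\bbn$ one differentiates $(I^{m}_{-}\tilde g)(\rho)=\frac{1}{(m-1)!}\int_\rho^\infty \tilde g(u)(u-\rho)^{m-1}\,du$ under the integral sign $m$ times to get $H^{(m)}(\rho)=(-1)^{m}\tilde g(\rho)$, in agreement with (\ref{frr+}) for integer order. Hence $H^{(m)}(0)=(-1)^{m}\tilde g(0)=(-1)^{m}(\M_x f)(0)=(-1)^{m}f(x)$. Substituting,
\[
\bigl[\partial_r^{k}(R^*_x Rf)(r)\bigr]_{r=0}=\pi^{k/2}\,\frac{(-1)^{k/2}\,k!}{(k/2)!}\,f(x),
\]
which gives (\ref{hel}) with the explicit constant
\[
c_k=\frac{(-1)^{k/2}\,(k/2)!}{\pi^{k/2}\,k!}.
\]

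The main obstacle is not the algebra but the analytic justification: one must ensure that (a) the Erd\'elyi-Kober integral $I^{k/2}_{-,2}\M_x f$ converges, which needs control of $f$ at infinity (the condition (\ref{for10zk}) with $\mu$-decay suffices), and (b) $H$ is $C^{m}$ at $\rho=0$, which requires $\M_x f$ to possess $k$ ordinary $r$-derivatives near $0$; for smooth $f$ this follows from the evenness of $(\M_x f)(r)$ in $r$. Under these assumptions, steps 1--3 above combine to give the formula and the value of $c_k$; relaxing smoothness (as done elsewhere in the paper for $C_\mu$ or $L^p$ classes) requires the more elaborate Erd\'elyi-Kober inversion machinery of Theorem \ref{78awqe} and replaces $\partial_r^{k}$ at $r=0$ with a suitable limit construction.
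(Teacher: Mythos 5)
Your proof is correct, and your constant agrees with the paper's: $c_k=\frac{(-1)^{k/2}(k/2)!}{\pi^{k/2}\,k!}$ coincides with the value $\frac{(-1)^{k/2}}{(k-1)!\,\sigma_{k-1}}$ obtained in the Appendix, since $\sigma_{k-1}=2\pi^{k/2}/\Gamma(k/2)$. The route is essentially the paper's, rewritten in the variable $\rho=r^2$: the paper splits the integral in (\ref{gyuyy}) as $\int_r^\infty=\int_0^\infty+(-1)^{k/2}\int_0^r$, noting that for even $k$ the tail $A(r)$ is an exact polynomial of degree $k-2$ (so $\partial_r^kA=0$) while the local piece $B(r)=f(x)r^k/k+r^kh(r)$ carries the coefficient of $r^k$; your degree-$(m-1)$ Taylor polynomial of $H$ is precisely that polynomial $A$ (up to a constant), and your identity $H^{(m)}=(-1)^m\tilde g$ replaces the paper's explicit evaluation of $B_1$. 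The one substantive difference is the hypothesis under which the argument is justified: you need $r\mapsto H(r^2)$ to be $2m$ times differentiable near $0$, which you secure by taking $f$ in the Schwartz class, whereas the paper's Appendix proof runs under the weaker assumption $f\in C^k_\mu(\bbr^n)$, $\mu>k$, by writing $B_2=r^kh(r)$ and bounding $h^{(j)}$, $j\le k$, directly in terms of $\partial^\a f$, $|\a|\le k$. That is exactly the extra work you flag at the end; for the theorem as stated (Helgason's smooth, rapidly decreasing setting) your argument is complete.
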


This formula is much simpler than those in Theorem \ref{invrhys}.
 The constant $c$ in (\ref{lloon}) was explicitly evaluated in \cite{AR}, where Theorem \ref{invrzz} has been extended to totally geodesic Radon transforms on arbitrary constant curvature space $X$. To state this result, we
introduce the {\it distance function} \be\label {mmi}
\rho(x,\xi)=\left\{ \!
 \begin{array} {ll} d(x,\xi) & \mbox{if $X=\bbr^n $,}\\
  \sinh \,d(x,\xi) & \mbox{if $X=\bbh^n $,}\\
  \sin d(x,\xi) & \mbox{if $X=S^n $.}\\

  \end{array}
\right. \ee The  corresponding shifted dual Radon transform can be defined by
\be\label {aaee}
(R^*_x\vp)(r)=\intl_{\rho(x,\xi)=r} \vp(\xi) \, d \mu (\xi), \qquad x \in X, \quad r>0,\ee
where $d \mu (\xi)$ is the relevant normalized canonical measure.

 \begin{theorem} \label {koom2} Let $\vp=R f$.  If $k$ is even, then
  \be\label {hel1}
  \partial_r^k \lam_X (r)(R^*_x \vp)(r)\Big |_{r=0}=c_X\,f(x),
\ee
where
 \[
\lam_X (r)=\left\{ \!
 \begin{array} {ll} 1 & \mbox{if $\;X=\bbr^n $,}\\
  (1+r^2)^{(k-1)/2} & \mbox{if $\;X=\bbh^n $,}\\
   (1-r^2)^{(k-1)/2} & \mbox{if $\;X=S^n $,}\\
 \end{array}
\right. \]

\[c_X=\left\{ \!
 \begin{array} {ll} (-1)^{k/2}(k-1)!\, \sig_{k-1} & \mbox{if $\;X=\bbr^n, \bbh^n $,}\\
  2(-1)^{k/2}(k-1)!\, \sig_{k-1} & \mbox{if $\;X=S^n $.}\\
   \end{array}
\right.\]
\end {theorem}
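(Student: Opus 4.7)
The plan is to verify (\ref{hel1}) separately in the three geometries by reducing it to a one-variable Riemann--Liouville identity via the substitution $u=r^2$ (or $u=1-r^2$ on the sphere), and then matching constants. Sections 3, 4, and 5 have already produced the needed representations. Formulas (\ref{gyuyy}) and (\ref{gyuyyh}) give the first two cases directly, and Lemma \ref{hhffvb}, after translating from its convention $r=\cos\theta$ to the present convention $r=\sin\theta$ coming from (\ref{mmi}), gives the third; concretely,
\[
\lambda_X(r)\,(R^*_x Rf)(r)=\begin{cases}\pi^{k/2}\,(I^{k/2}_{-,2}F_x)(r),& X=\bbr^n,\ \bbh^n,\\[4pt] 2\pi^{k/2}\,(I^{k/2}_{+,2}g_x)(\sqrt{1-r^2}),& X=S^n,\end{cases}
\]
where $F_x$ is the appropriate spherical mean ($\M_x f$ on $\bbr^n$, $\tilde M_x f$ on $\bbh^n$) with $F_x(0^+)=f(x)$, and $g_x(s)=s^{-1}(\bbm_x f)(s)$ with $g_x(1)=f(x)$.

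For even $k=2m$, the substitution $w=t^2$ in the modified Erd\'elyi--Kober integral yields the clean reduction
\[
(I^m_{\pm,2}F)(\sqrt u)=(I^m_{\pm}\tilde F)(u),\qquad \tilde F(w)=F(\sqrt w),
\]
turning the previous display into $\lambda_X(r)(R^*_x Rf)(r)=c^{(0)}_X H(u)$ with $H\in\{I^m_-\tilde F_x,\,I^m_+\tilde g_x\}$, $u=r^2$ or $u=1-r^2$, and $c^{(0)}_X=\pi^{k/2}$ or $2\pi^{k/2}$. Taylor expansion of $H$ at the appropriate endpoint then shows that only the $r^{2m}$ term survives $\partial_r^{2m}|_{r=0}$:
\[
\partial_r^{2m}H(r^2)\big|_{r=0}=\frac{(2m)!}{m!}\,H^{(m)}(0),\qquad \partial_r^{2m}H(1-r^2)\big|_{r=0}=\frac{(-1)^m(2m)!}{m!}\,H^{(m)}(1).
\]
The integer-order left-inverse identities $\partial_u^m I^m_{\pm}=(\pm 1)^m\,\mathrm{Id}$ then give $H^{(m)}(0)=(-1)^m f(x)$ for $\bbr^n,\bbh^n$ and $H^{(m)}(1)=f(x)$ for $S^n$, so in every case the derivative in (\ref{hel1}) equals $c^{(0)}_X\cdot (-1)^m(2m)!/m!\cdot f(x)$.

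Matching this with the stated $c_X$ reduces to the elementary identity
\[
\frac{(2m)!}{m!}=\frac{2\,(2m-1)!}{(m-1)!}=\frac{(k-1)!\,\sig_{k-1}}{\pi^{k/2}},
\]
which follows at once from $\sig_{k-1}=2\pi^{k/2}/\Gam(k/2)$ (equivalently, the Legendre duplication formula). This produces $c_{\bbr^n}=c_{\bbh^n}=(-1)^{k/2}(k-1)!\,\sig_{k-1}$ and $c_{S^n}=2(-1)^{k/2}(k-1)!\,\sig_{k-1}$, the extra factor $2$ tracing back to the prefactor $2\pi^{k/2}$ in Lemma \ref{hhffvb}. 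The main subtlety I anticipate is the sphere case, where one must carefully reconcile the two parameterizations $r=\cos\theta$ and $r=\sin\theta$; happily, the $(-1)^m$ introduced by expanding around $u=1$ rather than $u=0$ exactly cancels the sign difference between $I^m_+$ and $I^m_-$, leaving the uniform sign $(-1)^{k/2}$ in all three geometries. Termwise differentiation and the classical left-inverse formulas are justified by taking $f\in C_c^\infty$, the setting originally considered in \cite{He, AR} and apparently assumed here.
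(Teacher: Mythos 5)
Your argument is correct under the smoothness hypothesis that the theorem itself carries, and it is worth comparing with what the paper actually does, which is less than you might expect: the paper proves only the Euclidean case in detail (in the Appendix) and cites \cite{AR} for the rest. There, starting from (\ref{gyuyy}), one splits $\int_r^\infty=\int_0^\infty-\int_0^r$, observes that for even $k$ the kernel $(t^2-r^2)^{k/2-1}$ is a polynomial in $r$ of degree $k-2$, so the piece $A(r)$ is annihilated by $\partial_r^k$, and then extracts $f(x)$ from the finite piece by writing $B(r)=\tfrac{r^k}{k}f(x)+r^k h(r)$ and estimating $h^{(j)}$ directly --- an argument that needs only $f\in C^k_\mu(\bbr^n)$, $\mu>k$. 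You instead substitute $u=r^2$ (or $u=1-r^2$), recognize the whole expression as a genuine Riemann--Liouville integral $H$, and combine a Taylor expansion of $H$ at the endpoint with the left-inverse identity $\partial_u^m I^m_\pm=(\pm1)^m\,\mathrm{Id}$. These are two faces of the same computation --- your polynomial part of $I^m_-H$ is exactly the paper's $A(r)$, and the surviving $u^m$ coefficient is the paper's $B_1$ --- but the trade-off is real: the paper's explicit remainder estimate buys the result under much weaker regularity, while your route is cleaner, handles all three geometries uniformly (including the $\lam_X$ bookkeeping and the $\cos\theta$ versus $\sin\theta$ reconciliation on $S^n$, which the paper never writes out), and makes the constant evaluation transparent via $\sig_{k-1}=2\pi^{k/2}/\Gam(k/2)$. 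The one point you should make explicit is that differentiating $H(r^2)$ twenty-$m$... rather, $2m$ times at $r=0$ legitimately requires $H$ to be $C^{2m}$ up to the endpoint, not merely $C^m$; for $C^\infty$ data this holds because the spherical means are even smooth functions of the geodesic coordinate (so $\tilde F(w)=F(\sqrt w)$ is smooth at $w=0$), and this is precisely where the paper's more hands-on remainder argument earns its keep when the smoothness is reduced.
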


In both theorems it was assumed that $f$ is infinitely smooth and (for $X=\bbr^n, \bbh^n $) rapidly decreasing.
This assumption is redundant. See Appendix, where, for $X=\bbr^n$, it is shown that the result holds  under much weaker asumptions.

\vskip 0.3truecm

\noindent{\bf Open Problem.} {\it Extend Theorems  \ref{invrzz} and  \ref{koom2} to non-differentiable functions, e.g., $f\in L^p(X)$.}

Helgason's idea to invoke usual differentiation in place of $d/dr^2$ agrees with the 1927  paper by Mader \cite {Mad} in the sense that
  her inversion formula for the hyperplane Radon transform   also contains usual differentiation. However, the method of \cite {Mad} is  completely different.  For the sake of completeness, we present without proof a generalization of Mader's result, which was obtained in \cite{AR}.

For $r>0$ and $1 \le k \le n-1$, let
\be\label {las8}
 (L^*_x \vp)(r)=\intl_{\Xi}
  \vp (\xi)\, \rho^{k+1-n} \,\sgn (\rho-r) \,d\xi, \ee
\be
 (\tilde L^*_x \vp)(r)=\intl_{\Xi}
  \vp (\xi)\, \rho^{k+1-n} \,\log |\rho^2-r^2| \,d\xi,\ee
where $\rho= \rho(x, \xi)$ is the distance function  (\ref{mmi}).
  \begin{theorem} \label {koom} Let $\vp=R f$, where $f$ is a $C^\infty$ function, which is rapidly decreasing in the case $X=\bbr^n, \bbh^n $.

\vskip 0.3truecm

 \noindent {\rm (i)} If $k$ is even, then
   \be\label {hel2m}
\partial_r^{k+1} (L^*_x \vp)(r)\Big |_{r=0}= d_X\,f(x),
\ee
 where
 \[d_X=\left\{ \!
 \begin{array} {ll} 2(-1)^{(k+2)/2}\sig_{n-k-1} \sig_{k-1}\,(k-1)!  & \mbox{if $\;X=\bbr^n, \bbh^n $,}\\
2\, \sigma_{n-k-1}\, \sigma_k\, \sigma_{k-1}\,(k-1)! /\sigma_n & \mbox{if $\;X=S^n $.}\\
   \end{array}
\right.\]

 \noindent {\rm (ii)} If $k$ is odd, then
   \be\label {hel2}
\partial_r^{k+1} (\tilde L^*_x \vp)(r)\Big |_{r=0}=\tilde d_X\,f(x),
\ee
 where
 \[\tilde d_X=\left\{ \!
 \begin{array} {ll} \pi\,(-1)^{(k-1)/2} \sig_{n-k-1}\, \sig_{k-1} \,(k-1)! & \mbox{if $\;X=\bbr^n, \bbh^n $,}\\
2\pi (-1)^{(k-1)/2}\, \sigma_{n-k-1}\, \sigma_k\, \sigma_{k-1}\,(k-1)! /\sigma_n & \mbox{if $\;X=S^n $.}\\
   \end{array}
\right.\]
\end {theorem}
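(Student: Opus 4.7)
My plan is to reduce Theorem \ref{koom} to the already-established Theorem \ref{koom2} via a one-dimensional disintegration of $L^*_x \vp$ and $\tilde L^*_x \vp$ along level sets of the distance function $\rho(x,\cdot)$. The first step is to show that the weight $\rho^{k+1-n}$ built into (\ref{las8}) is precisely what cancels the geometric Jacobian of $\xi \mapsto \rho(x,\xi)$: in the Euclidean model, $d\xi = d\zeta\,du$ combined with polar coordinates $u = \rho\omega$ on $\zeta^\perp$ gives $|u|^{k+1-n}\,du = \rho^{k+1-n}\rho^{n-k-1}\,d\rho\,d\omega = d\rho\,d\omega$, and the angular integration produces $\sig_{n-k-1}(R^*_x \vp)(\rho)$. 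Parallel polar decompositions on $\bbh^n$ (with $\sinh^{n-k-1}\theta$) and $S^n$ (with $\sin^{n-k-1}\theta$) leave the residual factors $\cosh^{k-1}\theta = (1+\rho^2)^{(k-1)/2}$ and $\cos^{k-1}\theta = (1-\rho^2)^{(k-1)/2}$, which is exactly the weight $\lam_X(\rho)$ from Theorem \ref{koom2}. Consequently
\[
(L^*_x \vp)(r) = C_X \intl_0^\infty \sgn(\rho - r) \lam_X(\rho) (R^*_x \vp)(\rho)\,d\rho, \qquad (\tilde L^*_x \vp)(r) = C_X \intl_0^\infty \log|\rho^2 - r^2| \lam_X(\rho) (R^*_x \vp)(\rho)\,d\rho,
\]
with $C_X = \sig_{n-k-1}$ for $X = \bbr^n, \bbh^n$ and an extra factor $\sig_k/\sig_n$ for $X = S^n$ coming from the probability normalization of $d\xi$.

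For part (i) with $k$ even, one differentiation in $r$ gives $\partial_r \sgn(\rho - r) = -2\delta(\rho - r)$, so $\partial_r(L^*_x \vp)(r) = -2 C_X \lam_X(r)(R^*_x \vp)(r)$. Applying $\partial_r^k$ and evaluating at $r = 0$, Theorem \ref{koom2} yields $\partial_r^{k+1}(L^*_x \vp)(r)|_{r=0} = -2 C_X c_X f(x)$, and substituting the explicit values of $C_X$ and $c_X$ reproduces the stated $d_X$ in each of the three geometries.

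For part (ii) with $k$ odd, the crucial structural fact is that $\lam_X(\rho)(R^*_x \vp)(\rho)$ is even in $\rho$ (the shifted dual Radon transform averages $\vp$ over rotations of the $k$-plane/$k$-geodesic, hence $\rho$ and $-\rho$ give the same value), so the change of variables $u = \rho^2,\ s = r^2$ writes $(\tilde L^*_x \vp)(r) = F(r^2)$, with $F(s) = C_X \intl_0^\infty \log|u - s|\,h(u)\,du$ and $h(u) = \lam_X(\sqrt u)(R^*_x \vp)(\sqrt u)/(2\sqrt u)$. Although $\log|u-s|$ is non-smooth, the original integral $(\tilde L^*_x \vp)(r)$ is smooth in $r$ near $0$ (directly, on $\Xi$), so the $\sqrt s\log s$-type terms produced by separating $\intl_0^s$ and $\intl_s^\infty$ must cancel exactly. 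Since $k + 1$ is even, Fa\`a di Bruno gives $\partial_r^{k+1}|_{r=0} F(r^2) = \frac{(k+1)!}{((k+1)/2)!}\,F^{((k+1)/2)}(0)$, and $F^{(j)}(0) = -(j-1)!\,\hbox{F.p.}\intl_0^\infty h(u)\,u^{-j}\,du$. Rewriting this Hadamard finite-part integral back in $\rho$ and integrating by parts $(k-1)/2$ times transfers the derivatives onto $\lam_X(\rho)(R^*_x \vp)(\rho)$, identifying the boundary term at $\rho = 0$ with a half-integer Erd\'elyi-Kober derivative of $(R^*_x Rf)(\rho)/\lam_X(\rho)^{-1}$ at $\rho = 0$, which by Lemma \ref{mlkii} (applied with $\a = k/2$) equals a multiple of $f(x)$.

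The main obstacle is clearly part (ii): one must give a rigorous meaning to $\partial_r^{k+1}$ acting on a function whose natural integrand is only $L^1_{\mathrm{loc}}$ in $r$, and verify that the Hadamard-finite-part interpretation is consistent with the intrinsic smoothness of $\tilde L^*_x \vp$ on $\Xi$. Matching the precise constant $\tilde d_X$ then hinges on Legendre's duplication formula $\Gam(\a)\Gam(\a + \half) = 2^{1-2\a}\sqrt\pi\,\Gam(2\a)$, which is the analytic bridge between the half-integer Erd\'elyi-Kober integral $I^{k/2}_{\pm, 2}$ (arising naturally for odd $k$) and the logarithmic kernel that makes Mader-type inversion possible.
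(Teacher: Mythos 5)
First, a point of reference: the paper does not prove Theorem \ref{koom} at all --- it is stated explicitly ``without proof'' as a quotation from \cite{AR}, so there is no internal argument to compare yours against. Judged on its own terms, your part (i) is a sensible and essentially workable reduction: disintegrating $d\xi$ along the level sets of $\rho(x,\cdot)$, observing that the weight $\rho^{k+1-n}$ cancels the radial Jacobian $\rho^{n-k-1}$ up to the factor $\lam_X(\rho)$, and then using $\partial_r\,\sgn(\rho-r)=-2\delta(\rho-r)$ to land on Theorem \ref{koom2}. For $X=\bbr^n$ your constant $-2\sig_{n-k-1}c_X=2(-1)^{(k+2)/2}\sig_{n-k-1}\sig_{k-1}(k-1)!$ matches $d_X$ exactly. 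But you should notice that your own scheme does \emph{not} reproduce the stated spherical constant: with $C_{S^n}=\sig_{n-k-1}\sig_k/\sig_n$ and $c_{S^n}=2(-1)^{k/2}(k-1)!\,\sig_{k-1}$ you get $-2C_{S^n}c_{S^n}=4(-1)^{k/2+1}\sig_{n-k-1}\sig_k\sig_{k-1}(k-1)!/\sig_n$, which differs from the asserted $d_{S^n}$ by a factor $2(-1)^{k/2+1}$. Something in the spherical disintegration (the parametrization $r=\sin\theta$ versus $r=\cos\theta$ used in (\ref{ufus}), the normalization of $d\mu$, or an antipodal identification) is not accounted for, and the polar density $\sinh^{n-k-1}\theta\cosh^k\theta\,d\theta$ you implicitly use for $\bbh^n$ is asserted rather than derived.

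The genuine gap is part (ii), which is the actual content of the Mader-type formula for odd $k$ and which you do not prove. Three things are missing. (a) The smoothness of $r\mapsto(\tilde L^*_x\vp)(r)$ near $r=0$ is not obvious: the kernel $\log|\rho^2-r^2|$ is singular on the codimension-one set $\{\rho(x,\xi)=r\}$, and asserting that the ``$\sqrt{s}\log s$-type terms must cancel exactly'' is precisely the claim that needs proof, not a consequence of anything you have established. (b) The identity $F^{(j)}(0)=-(j-1)!\,\hbox{F.p.}\int_0^\infty h(u)u^{-j}\,du$ interchanges $\partial_s^j$ with a divergent integral; since $h(u)\sim cu^{-1/2}$ near $0$, every $j\ge 1$ produces a divergent integral, and the finite part is exactly where $f(x)$ hides --- via the local expansion of $(R^*_xRf)(\rho)$ coming from the Abel representation (\ref{gyuyy}). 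You never carry out this extraction, so the appearance of $f(x)$ with the factor $\pi(-1)^{(k-1)/2}(k-1)!$ is unsubstantiated; in particular the factor $\pi$ (which must come from the duplication formula or from $\Gam(1/2)^2$ in the half-integer Erd\'elyi--Kober composition (\ref{sgwe2})) is named but never produced by a computation. (c) The final step ``integrating by parts $(k-1)/2$ times \dots identifying the boundary term with a half-integer Erd\'elyi--Kober derivative \dots which by Lemma \ref{mlkii} equals a multiple of $f(x)$'' is a program, not an argument. As it stands, part (ii) would not survive refereeing.
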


\section{Appendix}

Let us prove  Theorem \ref{koom2} for the case $X=\bbr^n$. We provide  more details, than in \cite{AR}, and
make assumptions for $f$ more precise.
For $\mu>0$ and $k\in \bbn$, let
$$C^k_\mu (\bbr^n)= \{ f \in C^k (\bbr^n): \ |\partial^\a f (x)=O(|x|^{-\mu})
\;\, \forall \;  |\a| \le k \}.$$

\begin{proposition} Suppose that $1\le k\le n-1$, $\mu>k$. Then
  a function $f\in C^k_\mu (\bbr^n)$  can be reconstructed from  $\vp=Rf$ by the formula
\be\label{kopppp} f(x) =  \lim\limits_{r\to 0}\,c_k\,\left (- \partial_r \right )^{k} \,(R_x^* \, \vp)(r), \qquad
c_k=\frac{(-1)^{k/2}}{(k-1)!\, \sig_{k-1}},\ee
where the  limit  is uniform on $\bbr^n$.
\end{proposition}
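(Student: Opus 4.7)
The plan is to bootstrap the conclusion from the previously-established inversion of Theorem~\ref{invr1abf}(i), which in the even-$k$ case reads
$$f(x)=\lim_{r\to 0}\pi^{-k/2}(-D)^{k/2}(R^*_x\vp)(r),\qquad D=\frac{1}{2r}\frac{d}{dr}=\frac{d}{dr^2},$$
by reinterpreting $(-D)^{k/2}$ in terms of the ordinary derivative $(-\partial_r)^k$. The identity (\ref{gyuyy}) combined with the substitution (\ref{as34b}) shows that $h_x(r):=(R^*_x\vp)(r)$ is intrinsically a function of $r^2$: writing $j=k/2$ and $g_x(\tau)=(\M_x f)(\sqrt\tau)$,
$$h_x(r)=H_x(r^2),\qquad H_x(\rho):=\pi^j (I^j_{-}g_x)(\rho)=\frac{\pi^j}{(j-1)!}\intl_\rho^\infty(\tau-\rho)^{j-1}g_x(\tau)\,d\tau.$$
On such functions the Erd\'elyi--Kober operator $D$ coincides with the usual $\partial_{r^2}$, so $(-D)^{k/2}h_x(r)\big|_{r=0}=(-1)^{k/2}H_x^{(k/2)}(0)$, and the task reduces to expressing $H_x^{(k/2)}(0)$ in terms of $\partial_r^k h_x(r)\big|_{r=0}$.

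The regularity hypothesis $f\in C^k_\mu(\bbr^n)$ with $\mu>k$ enters in three places. First, $(\M_x f)(r)=\int_{SO(n)}f(x+r\gamma e_n)\,d\gamma$ is $C^k$ in $r$ and even in $r$ (rotation invariance of the averaging annihilates odd Taylor coefficients), so $g_x(\tau)$ extends to a $C^j$ function on $[0,\infty)$ whose derivatives are bounded uniformly in $x$, since the functions $\partial^\alpha f$ are continuous on $\bbr^n$ with polynomial decay and hence globally bounded. Second, $\mu>k$ guarantees absolute convergence of $(I^j_{-}g_x)(\rho)$ and legitimizes differentiation under the integral sign: the boundary terms at $\tau=\rho$ vanish because $(\tau-\rho)^{j-m-1}\big|_{\tau=\rho}=0$ for $0\le m<j-1$, so one obtains $H_x\in C^j[0,\infty)$ with $H_x^{(j)}(\rho)=(-\pi)^j g_x(\rho)$ and, in particular, $H_x^{(j)}(0)=(-\pi)^j f(x)$. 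Third, the bound $|\partial^\alpha f(x)|\le C|x|^{-\mu}$ supplies the uniform-in-$x$ modulus of continuity at $r=0$ needed for the passage to a uniform limit.

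Since $H_x\in C^j$ near $\rho=0$, the composition $h_x(r)=H_x(r^2)$ lies in $C^{2j}=C^k$ near $r=0$, and a direct chain-rule computation identifies its Taylor coefficients: $h_x^{(i)}(0)=0$ for odd $i$ and $h_x^{(2m)}(0)=\frac{(2m)!}{m!}H_x^{(m)}(0)$ for $0\le m\le j$. Taking $m=j$ and using that $k$ is even,
$$(-\partial_r)^k h_x(r)\big|_{r=0}=h_x^{(k)}(0)=\frac{k!}{(k/2)!}H_x^{(k/2)}(0)=\frac{(-1)^{k/2}\,k!\,\pi^{k/2}}{(k/2)!}\,f(x),$$
and the identity $\sig_{k-1}=2\pi^{k/2}/(k/2-1)!$ reduces $\frac{(k/2)!}{k!\,\pi^{k/2}}$ to $\frac{1}{(k-1)!\,\sig_{k-1}}$, so the prefactor matches $c_k=(-1)^{k/2}/((k-1)!\,\sig_{k-1})$. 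Uniform convergence in $x\in\bbr^n$ follows from $H_x^{(j)}(\rho)-H_x^{(j)}(0)=(-\pi)^j[(\M_x f)(\sqrt\rho)-f(x)]$ together with the bound $|(\M_x f)(r)-f(x)|\le r\,\|\nabla f\|_\infty$, both of which are $O(r)$ with constants independent of $x$. The main obstacle is the middle paragraph: one must justify the evenness of $\M_x f$ as a function of $r$, the differentiation under the integral sign for $I^j_{-}g_x$, and the propagation of the pointwise decay $|\partial^\alpha f|\le C|x|^{-\mu}$ to genuinely uniform-in-$x$ estimates for the intermediate derivatives $H_x^{(m)}$; this is precisely where the condition $\mu>k$ is invoked sharply.
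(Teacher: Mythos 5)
Your route is genuinely different from the paper's, so let me first record what the paper does. It never passes to the variable $\rho=r^2$ or matches Taylor coefficients. Instead it splits $(R^*_x\vp)(r)=\sig_{k-1}\,[A(r)+(-1)^{k/2}B(r)]$ with $A(r)=\int_0^\infty(\M_xf)(t)(t^2-r^2)^{k/2-1}t\,dt$ and $B(r)=\int_0^r(\M_xf)(t)(r^2-t^2)^{k/2-1}t\,dt$, notes that for even $k$ the term $A$ is a polynomial in $r$ of degree $k-2$ (so $\partial_r^kA\equiv0$), and writes $B=f(x)\,r^k/k+r^kh(r)$ with $h(r)=\int_0^1(1-t^2)^{k/2-1}[(\M_{rt}f)(x)-f(x)]\,t\,dt$, whose derivatives up to order $k$ are bounded uniformly in $x$ by $\sup_{|\a|\le k}\sup_x|\partial^\a f(x)|$. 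Leibniz then gives $\partial_r^kB\to(k-1)!\,f(x)$ uniformly. This needs nothing beyond boundedness of $\partial^\a f$, $|\a|\le k$, and convergence of $A$, i.e.\ $\mu>k$. (Both you and the paper tacitly use that $k$ is even; the constant $(-1)^{k/2}$ already forces this.)

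Your argument has a concrete flaw at its crux. The implication ``$H_x\in C^j$ near $\rho=0$, hence $h_x(r)=H_x(r^2)\in C^{2j}$ near $r=0$'' is false as a general statement: $H(\rho)=\rho^{5/2}\sin(1/\rho)$ is $C^1$ on $[0,\infty)$, yet $H(r^2)=r^5\sin(r^{-2})$ is not $C^2$ at $0$ (its second derivative contains the unbounded term $-4r^{-1}\sin(r^{-2})$). And you genuinely need $h_x\in C^k$ near $0$, not merely the existence of the Taylor coefficient $h_x^{(k)}(0)$, because the assertion concerns $\lim_{r\to0}\partial_r^kh_x(r)$. The repair uses your first paragraph rather than your $C^j$ claim for $H_x$: what must be shown is $g_x\in C^j[0,\infty)$, which upgrades $H_x=\pi^jI^{j}_{-}g_x$ to $C^{2j}$, whence $h_x=H_x(r^2)\in C^{2j}=C^k$ by the ordinary chain rule. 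But $g_x\in C^j$ is itself the nontrivial point: it is Whitney's theorem that an even $C^{2j}$ function of $r$ is a $C^j$ function of $r^2$, and ``rotation invariance annihilates odd Taylor coefficients'' is not by itself a proof of that. Finally, for the uniform limit you need uniform-in-$x$ bounds on $H_x^{(m)}$ for all $j\le m\le k$, since the Fa\`a di Bruno expansion of $\partial_r^kH_x(r^2)$ contains terms $r^{2m-k}H_x^{(m)}(r^2)$ with $m>j$, not only the $m=j$ term you estimate. All of this is fixable with the ingredients you list, but it is precisely the machinery the paper's elementary decomposition is designed to avoid.
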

\begin{proof} Fix $x$ and  write (\ref{gyuyy}) in the form
  $$(R^*_x \vp) (r)=\sig_{k-1}\, [A(r)+(-1)^{k/2}\,B(r)],$$ where
\[A(r)=\intl_0^\infty \!(\M_x f)(t) (t^2
-r^2)^{k/2 -1} t \, dt, \]
\[ B(r)=\intl_0^r (\M_x f)(t)\, (r^2-t^2)^{k/2 -1} t\,dt. \]
Since $A(r)$ is a polynomial  of degree $k-2$, then $\partial_r^k A(r)=0$.
Regarding $B(r)$, we write it  as $B_1 +  B_2$, where
$$
B_1=f(x)\intl_0^r  (r^2-t^2)^{k/2 -1} t \, dt=\frac{ r^k}{k}\, f(x),
$$
$$
B_2=\intl_0^r  (r^2-t^2)^{k/2 -1} [(\M_t f)(x) - f(x)]\,  t \, dt=r^k h(r),$$
$$
h(r)= \intl_0^1  (1-t^2)^{k/2 -1} [(\M_{rt} f)(x) - f(x)]\,  t \, dt.
$$
Clearly, $\partial_r^k B_1=(k-1)!  \, f(x)$. Furthermore,

\[\lim\limits_{r\to 0} \partial_r^k B_2=\sum\limits_{j=0}^k c_j \lim\limits_{r\to 0} r^j h^{(j)} (r).\]
The term corresponding to $j=0$ is obviously zero. Other terms are also zero because $h^{(j)} (r)$ is uniformly bounded. Indeed,
\bea |h^{(j)} (r)|&\le&\intl_0^1  (1-t^2)^{k/2 -1}  t \, dt \intl_{S^{n-1}} | \partial_r^j  f(x+rt\theta)|\, d\theta \nonumber\\
&\le&\sum\limits_{|\a|=j}\intl_0^1  (1-t^2)^{k/2 -1}  |\,\theta ^\a \partial^\a f (x+rt\theta)|\,t^{j+1}  \, dt\nonumber\\
&\le& \tilde c_j \sup_{|\a|=j} \, \sup_x |\partial^\a f (x)|.\nonumber\eea
Thus, $\lim\limits_{r\to 0}\partial_r^k B_2(r)=0$, and the result follows.
\end{proof}


\bibliographystyle{amsalpha}

\end{document}